\theoremstyle{plain}
\newtheorem{Theorem}{Theorem}
\newtheorem{Proposition}[Theorem]{Proposition}
\newtheorem{Lemma}[Theorem]{Lemma}
\newtheorem{Corollary}[Theorem]{Corollary}
\newtheorem{Example}[Theorem]{Example}
\theoremstyle{remark}
\newtheorem*{Remark}{Remark}
 \numberwithin{Theorem}{section}
\newcounter{desccount}
\newcommand{\descitem}[1]{%
  \item[#1] \refstepcounter{desccount}\label{#1}
}
\newcommand{\descref}[1]{\hyperref[#1]{#1}}
\title{Profinite non-rigidity of arithmetic groups}
\author[Amir Y. Weiss Behar]{\bfseries Amir Y. Weiss Behar}
\address{
Einstein Institute of Mathematics \\
Edmond J. Safra Campus (Givat-Ram)\\
The Hebrew University of Jerusalem\\
9190401\\
Israel}
\email{amir.behar@mail.huji.ac.il}
\begin{document}

\vspace{18mm} \setcounter{page}{1} \thispagestyle{empty}

\begin{abstract}
We show that for a typical high rank arithmetic lattice $\Gamma$, there exist finite index subgroups $\Gamma_{1}$ and $\Gamma_{2}$ such that $\Gamma_{1} \not\simeq \Gamma_{2}$ while $\widehat{\Gamma_{1}} \simeq \widehat{\Gamma_{2}}$. But there are exceptions to that rule.
\end{abstract}

\maketitle

\section{Introduction}

Let $\Gamma$ be a finitely generated residually finite group. We say that $\Gamma$ is profinitely-rigid if whenever $\widehat{\Lambda}\cong \widehat{\Gamma}$ for some finitely generated residually finite group $\Lambda$, then $\Lambda \cong \Gamma$. Here $\widehat{\Gamma}$ (resp. $\widehat{\Lambda}$) denotes the profinite completion of $\Gamma$ (resp. $\Lambda$). \newline
Up until recently, the only profinite rigid groups were "small" (i.e. without non-abelian free subgroups). Recently, in a groundbreaking work, Bridson, McReynolds, Reid and Spitler gave first examples of "big" groups which are profinitely rigid, among them are some fundamental groups of hyperbolic 3-manifolds \cite{BMRS20} and some triangle groups \cite{BMRS21}. \newline
Arithmetic subgroups of semisimple Lie groups need not be profinitely rigid (\cite{Ak12},\cite{KK20}). A well known open problem asks: 
\[
 \text{ For $n\geq 2$, is $SL_{n}(\mathbb{Z})$ profinitely rigid?} 
 \]
While we will not answer this question, we will show that there are finite index subgroups of these groups (at least when $n\geq 3$) which are not profinitely rigid. In fact, we will show a much more general result:
\begin{Theorem}[Main Theorem] \label{Main~The}
Let $k$ be a number field and $\mathbf{G}$ be a connected, simply connected, absolutely almost simple $k$-linear algebraic group of high $\infty$-rank such that $\mathbf{G}(k)$ satisfies the congruence subgroup property and $\Gamma \subseteq \mathbf{G}(k)$ an arithmetic subgroup.
\begin{enumerate}
\item Unless $\mathbf{G}$ is of type $G_2,F_4$ or $E_8$ and $k=\mathbb{Q}$, $\Gamma$ has infinitely many pairs of finite index subgroups $\Gamma_{1}$ and $\Gamma_2$ which are not isomorphic but their profinite completions are. 
\item The exceptional cases are truly exceptional and in these cases, there are no such pairs at all. In fact, if $\Gamma_{1},\Gamma_{2}\subseteq \mathbf{G}(\mathbb{Q})$ are arithmetic subgroups with isomorphic profinite completions then $\Gamma_{1}$ and $\Gamma_{2}$ are isomorphic.
\end{enumerate}
\end{Theorem}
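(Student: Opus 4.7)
My strategy rests on the congruence subgroup property: under the stated hypotheses, for every arithmetic $\Gamma \subseteq \mathbf{G}(k)$ there is a finite-kernel short exact sequence
\[ 1 \longrightarrow C(\mathbf{G},k) \longrightarrow \widehat{\Gamma} \longrightarrow \overline{\Gamma} \longrightarrow 1, \]
where $\overline{\Gamma}$ is the closure of $\Gamma$ in $\mathbf{G}(\mathbb{A}_f)$. This essentially identifies $\widehat{\Gamma}$ with an open compact subgroup of the finite adeles, up to finite extension. Combined with Margulis super-rigidity (available because the $\infty$-rank is large), abstract isomorphisms of arithmetic groups or of their profinite completions are forced to come from algebraic or adelic isomorphisms of $\mathbf{G}$.

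\textbf{Part (1).} For each non-exceptional type I would manufacture finite-index subgroups $\Gamma_1, \Gamma_2 \le \Gamma$ whose adelic closures are conjugate in $\mathbf{G}(\mathbb{A}_f)$ — hence their profinite completions are isomorphic via the CSP — but which are not conjugate by any element of $\mathrm{Aut}(\mathbf{G})(k)$. The key ingredient is an element $g \in \mathbf{G}(\mathbb{A}_f)$ that normalises $\overline{\Gamma}$ but does not come from $\mathrm{Aut}(\mathbf{G})(k)$; setting $\Gamma_2 := g\Gamma_1 g^{-1}$, arranged via a suitable principal congruence subgroup to land inside $\Gamma$, yields $\widehat{\Gamma_1} \cong \widehat{\Gamma_2}$ by construction, while Margulis rigidity forces any abstract isomorphism $\Gamma_1 \cong \Gamma_2$ to lift to an element of $\mathrm{Aut}(\mathbf{G})(k)$, contradicting the choice of $g$. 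Existence of such $g$ is a Galois-cohomology question about $\mathrm{Aut}(\mathbf{G})$: for types $A_n$ ($n\geq 2$), $D_n, E_6$ use the outer Dynkin automorphism that is locally inner; for $B_n, C_n, E_7$ exploit the non-trivial center and the non-surjectivity of adelic onto global normalisers detected by $H^1(k,Z(\mathbf{G}))$; for $G_2, F_4, E_8$ over $k \neq \mathbb{Q}$ use failure of the Hasse principle for $\mathbf{G}$ itself over suitable number fields (totally real fields of appropriate arithmetic). Infinitely many pairs are obtained by varying the congruence level.

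\textbf{Part (2).} For $\mathbf{G}$ of type $G_2, F_4,$ or $E_8$ over $\mathbb{Q}$ all three obstructions above collapse: $\mathrm{Out}(\mathbf{G}) = 1$ (rigid Dynkin diagram), $Z(\mathbf{G}) = 1$, and the Hasse principle $H^1(\mathbb{Q},\mathbf{G}) \hookrightarrow \prod_v H^1(\mathbb{Q}_v,\mathbf{G})$ holds (Chernousov for $E_8$; classical for $G_2, F_4$). Given $\widehat{\Gamma_1} \cong \widehat{\Gamma_2}$, the CSP sequence provides an abstract isomorphism $\overline{\Gamma_1} \cong \overline{\Gamma_2}$ up to the common finite central kernel. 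A rigidity theorem for automorphisms of open compact subgroups of $\mathbf{G}(\mathbb{A}_f)$ — in the spirit of Pink's reconstruction for $p$-adic analytic groups, applied place-by-place and glued — forces this isomorphism to be induced by conjugation by some $g \in \mathbf{G}(\mathbb{A}_f)$; here the vanishing of $\mathrm{Out}(\mathbf{G})$ and $Z(\mathbf{G})$ at every place is exactly what rules out extra automorphisms. The Hasse principle descends $g$ to $g_0 \in \mathbf{G}(\mathbb{Q})$, and $g_0\Gamma_1 g_0^{-1}$ is commensurable with $\Gamma_2$; a standard adjustment by the residual finite data promotes this to $\Gamma_1 \cong \Gamma_2$.

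\textbf{Main obstacle.} The sharpest difficulty is Part (2): converting an \emph{abstract} profinite isomorphism into one realised by adelic conjugation. All three structural properties ($\mathrm{Out} = 1$, $Z = 1$, Hasse principle) must be invoked in concert, and stray automorphisms of the compact open subgroups at each place must be ruled out before gluing globally. Part (1) is more bookkeeping, but a type-by-type Galois-cohomology verification is required, with particular care that the chosen twist remains arithmetic (rather than merely $S$-arithmetic) and that the non-trivial cohomology class does not accidentally become trivial under the reductions one makes at each congruence level.
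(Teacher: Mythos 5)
Your proposal captures the right high-level skeleton (CSP reduces $\widehat{\Gamma}$ to an adelic-congruence object, Margulis/adelic rigidity controls abstract isomorphisms, and for Part 2 one must descend an adelic conjugation to a rational one), but there are several concrete errors and gaps that diverge from the paper.

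For Part 1, the paper does not go through Galois cohomology at all; it uses two elementary explicit mechanisms. When $\mathbf{G}$ is not of type $G_2, F_4, E_8$, the center is nontrivial and one simply adjoins a central element of $\tilde{\mathbf{G}}(\mathcal{O}_v)$ at a single place $v$ to a principal congruence subgroup, choosing $v$ lying over $p$ for $\Gamma_1$ and over a different rational prime $q$ for $\Gamma_2$: the two profinite groups are abstractly isomorphic, but Margulis forces any isomorphism to permute places compatibly with a field automorphism, which cannot move a $v\mid p$ place to a $v\mid q$ place. When $k\neq\mathbb{Q}$, one uses a second mechanism: a rational prime splitting completely in $k$ and the permutation of the places above it. Your claim that $G_2, F_4, E_8$ over $k\neq\mathbb{Q}$ can be handled by ``failure of the Hasse principle for $\mathbf{G}$'' is false as stated: the Hasse principle for simply connected semisimple groups is a theorem (Kneser--Harder--Chernousov), including $E_8$, which you correctly invoke in Part 2. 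Moreover, your non-isomorphism argument has a logical gap: Margulis tells you an abstract isomorphism $\Gamma_1\to\Gamma_2$ is induced by some rational $(\Phi,\sigma)$, but the fact that your chosen adelic $g$ is not of that form does not preclude the existence of \emph{another} rational $(\Phi,\sigma)$ carrying $\Gamma_1$ to $\Gamma_2$. The paper closes this gap by extracting a local invariant (the presence of a center at a specific place, or the identity of a permuted place) that any rational isomorphism must preserve but the construction deliberately breaks.

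For Part 2, the descent step is misattributed. The Hasse principle does not ``descend $g\in\mathbf{G}(\mathbb{A}^f)$ to $g_0\in\mathbf{G}(\mathbb{Q})$''; it is a statement about injectivity of $H^1$. The paper instead uses adelic superrigidity (Kammeyer--Kionke) to write the profinite isomorphism as conjugation by an adelic element (place-by-place, using that continuous maps between $p$-adic and $q$-adic groups with $p\neq q$ are trivial, and that $\operatorname{Out}=Z=1$ for these types), truncates $g$ to the finitely many bad places, and then applies the \emph{Strong Approximation Theorem} to find a rational $g_0$ in the open coset $g\,\overline{\Gamma_1}$. Your Pink-style rigidity idea plays the role of adelic superrigidity and is in the right spirit, but the global gluing step must be Strong Approximation, not the Hasse principle.
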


Note that we show that each such $\Gamma$ has a finite index subgroup $\Gamma_{1}$ which is not profinitely rigid by showing that $\widehat{\Gamma_{1}}\cong \widehat{\Gamma_{2}}$ for some $\Gamma_{2}$ commensurable to it. This complements results of \cite{KK20},\cite{Ak12} and \cite{KS23} which give examples of non-commensurable arithmetic groups which are profinitely isomorphic. \newline
To illustrate our methods, let us now present them only for $\Gamma=SL_{4}(\mathbb{Z})$ (methods A \& B) and $\Gamma=SL_{2}(\mathbb{Z}[\sqrt{2}])$ (method C). \newline

\begin{description}
\descitem{Method A}  Using the centre of the simply connected form: Let $2 \neq p,q$ be two different primes, and let $\Lambda$ be the principle congruence subgroup corresponding to $pq$. Let $\rho_{p}$ be the element of $\widehat{SL_{4}(\mathbb{Z})}\cong \prod_{r}SL_{4}(\mathbb{Z}_{r})$ which is $1$ at the places $r\neq {p}$ and $-1$ at the place $p$, similarly define $\rho_{q}$. Define $\Gamma_{1} := \langle \widehat{\Lambda},\rho_{p}\rangle \cap SL_{4}(\mathbb{Z})$ and $\Gamma_{2} := \langle \widehat{\Lambda},\rho_{q}\rangle \cap SL_{4}(\mathbb{Z})$. Then $\widehat{\Gamma_{1}} \cong \widehat{\Lambda} \times \mathbb{Z}/2\mathbb{Z} \cong \widehat{\Gamma_{2}}$ but $\Gamma_{1}$ and $\Gamma_{2}$ cannot be isomorphic. For details see \ref{Proof~A}. \newline
\descitem{Method B} Using a non-trivial Dynkin automorphism: Let $2,3\neq p,q$ be two different primes. Consider the following maximal parabolic subgroups of $SL_{4}(\mathbb{Z}/p\mathbb{Z})$.
\[
P_{1,p}:=
\left\{ \begin{pmatrix} 
\ast & \ast & \ast & \ast \\
     & \ast & \ast & \ast \\
     & \ast & \ast & \ast \\
     & \ast & \ast & \ast \\
     \end{pmatrix} \right\}, 
\quad 
P_{2,p}:=
\left\{ \begin{pmatrix} 
\ast & \ast & \ast & \ast \\
\ast & \ast & \ast & \ast \\
\ast & \ast & \ast & \ast \\
     &      &      & \ast \\
     \end{pmatrix} \right\} 
\]
Similarly define the maximal parabolic subgroups $P_{1,q}$ and $P_{2,q}$ of $SL_{4}(\mathbb{Z}/q\mathbb{Z})$. Now, let $\Gamma_{1}$ be the congruence subgroup corresponding to $P_{1,p} \mod p$, $P_{1,q} \mod q$ and is trivial $\mod 3$, and  $\Gamma_{2}$ to be the congruence subgroup corresponding to $P_{1,p} \mod p$, $P_{2,q} \mod q$ and is trivial$\mod 3$. Then the profinite completions of $\Gamma_{1}$ and $\Gamma_{2}$ are isomorphic via the automorphism of $\widehat{SL_{4}(\mathbb{Z})}$ which is the non-trivial Dynkin automorphism at the place $q$ and the identity elsewhere, but they themselves cannot be isomorphic. For details see \ref{Proof~B}. \newline
\descitem{Method C} Using the number field: Let $2\neq p,q$ be two different primes such that $2$ is a square in $\mathbb{Q}_{p}$ and $\mathbb{Q}_{q}$, hence $p$ and $q$ splits completely in $\mathbb{Q}[\sqrt{2}]$. Set $\mathcal{O}=\mathbb{Z}[\sqrt{2}]$ and $\mathfrak{p}_{1},\mathfrak{p}_{2}|p$, $\mathfrak{q}_{1},\mathfrak{q}_{2}|q$ to be the primes lying over $p$ and $q$ respectively. Let $\Gamma_{1}$ be the principle congruence subgroup corresponding to $\mathfrak{p}_{1}\mathfrak{q}_{1}$, and $\Gamma_{2}$ be the principle congruence subgroup corresponding to $\mathfrak{p}_{2}\mathfrak{q}_{1}$. As $SL_{2}(\mathcal{O})$ has trivial congruence kernel, $\widehat{SL_{2}(\mathcal{O})} \cong \left( \prod_{l\neq p,q} SL_{2}(\mathcal{O}_{l}) \right) \times \prod_{i=1}^{2} \left( SL_{2}(\mathcal{O}_{\mathfrak{p}_{i}})\times SL_{2}(\mathcal{O}_{\mathfrak{q}_{i}}) \right)$, where for a prime $l\neq p,q$, $\mathcal{O}_{l}$ is the completion of $\mathcal{O}$ with respect to the primes lying over $l$. Then the profinite completions, $\widehat{\Gamma_{1}}$ and $\widehat{\Gamma_{2}}$ are isomorphic via the automorphism of $\widehat{SL_{2}(\mathbb{Z}[\sqrt{2}])}$ which is the transposition of the places $\mathfrak{p}_{1}$ and $\mathfrak{p}_{2}$. But $\Gamma_{1}$ and $\Gamma_{2}$ cannot be isomorphic. For details see \ref{Proof~C}. \newline
\end{description}

The main theorem is proved by generalizing the above methods to more general arithmetic lattices (In fact, only methods A and C are really needed). \newline
The paper is organized as follows: after some preliminaries in \S2, we will generalize methods A and C in \S3 deducing the first part of the main theorem. In \S4, we will elaborate on method B and finally in \S5 we will prove that the exceptional cases are true exceptions, concluding the main theorem. In \S6, we will give a stronger and more general version of the main theorem, stating that it holds for $S$-arithmetic groups and not merely for arithmetic groups. Moreover, one can get any (finite) number of non-isomorphic subgroup with isomorphic profinite completions (not just pairs). \newline

\textbf{Acknowledgments.} This work is a part of the author's PhD thesis at the Hebrew University.  For suggesting the above topic and for providing helpful guidance, suggestions and ideas I am deeply grateful to my advisors Alexander Lubotzky and Shahar Mozes. During the period of work on this paper I was supported by the European Research Council (ERC) under the European Union’s Horizon 2020 research and innovation programme (grant agreement No 882751) and by the ISF-Moked grant 2019/19.

\section{Preliminaries}
Throughout we assume that $k$ is a number field. The set of places of $k$ is denoted by $V(k)$, it is the union of the set of archimedean places $V_{\infty}(k)$ and the set of finite places $V_{f}(k)$. The completion of $k$ at $v\in V(k)$ is denoted by $k_{v}$. Let $\mathcal{O}_{k}$ denote the ring of integers of $k$ and for a finite place $v\in V_{f}(k)$, denote by $\mathcal{O}_{k,v}$ the ring of integers of $k_{v}$. The ring of finite adeles $\mathbb{A}_{k}^{f}= \prod^{*}_{v\in V_{f}(k)}k_{v} := \{ (x_{v})_{v}\in \prod_{v\in V_{f}(k)}k_{v}: \, x_{v}\in \mathcal{O}_{k,v} \text{ for all but finitely many places } \}$ is the restricted product over all the finite completions of $k$. If $k$ is clear from the context, we will omit the letter $k$ from all the above.\newline 
Let $\tilde{\mathbf{G}}$ be a connected, simply connected, absolutely almost simple $k$-linear algebraic group, with a fixed faithful $k$-representation $\rho:\tilde{\mathbf{G}} \to GL(n_{\rho})$. A subgroup $\Gamma \subseteq \tilde{\mathbf{G}}(k)$ is called arithmetic if it is commensurable with $\tilde{\mathbf{G}}(\mathcal{O})$ (see \cite{Mo01} and \cite{PR93}  for more details about arithmetic groups). We will usually write $\mathbf{G}$ for the adjoint form of $\tilde{\mathbf{G}}$ (which is the universal form), and by $\pi:\tilde{\mathbf{G}} \to \mathbf{G}$ the universal covering map, it is a central isogeny, and $\mathbf{C}:= \ker \pi = \mathcal{Z}(\tilde{\mathbf{G}})$ is a finite group. The $V_{\infty}(k)$-rank of $\tilde{\mathbf{G}}$ is $\text{rank}_{V_{\infty}(k)}\tilde{\mathbf{G}} := \sum_{v\in V_{\infty}(k)} \text{rank}_{k_{v}}\tilde{\mathbf{G}}$, where $\text{rank}_{k_{v}}\tilde{\mathbf{G}}$ is the dimension of a maximal $k_{v}$-split torus, $\tilde{\mathbf{G}}$ is said to have high $\infty$-rank if its $V_{\infty}(k)$-rank is $\geq 2$. \newline

We will use Margulis' superrigidity in a rather delicate manner. The particular version we use is the following:
\begin{Theorem}[Margulis' superrigidity]\label{Mar~Sup~Rig}
Assume $\tilde{\mathbf{G}}$ is of high $\infty$-rank, and let $\Gamma_{1},\Gamma_{2} \subseteq \tilde{\mathbf{G}}(k)$ be arithmetic subgroups. Assume further that $\Gamma_{i} \cap \mathbf{C}(k)={1}$. If $\varphi:\Gamma_{1}\to \Gamma_{2}$ is an isomorphism, then there exists a unique $k$-automorphism $\Phi$ of $\tilde{\mathbf{G}}$ and a unique automorphism $\sigma$ of $k$ such that $\varphi(\gamma)=\Phi(\sigma^{0}(\gamma))$ for every $\gamma\in \Gamma_{1}$, where $\sigma^{0}$ is the automorphism of $\tilde{\mathbf{G}}(k)$ induced by $\sigma$.
\end{Theorem}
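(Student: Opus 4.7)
The plan is to derive this refined statement from the classical form of Margulis' superrigidity theorem for irreducible higher rank lattices, followed by a structural analysis of the resulting Lie group automorphism. First, I would embed $\Gamma_1$ and $\Gamma_2$ diagonally as irreducible lattices in the real Lie group $G_\infty := \prod_{v \in V_\infty(k)} \tilde{\mathbf{G}}(k_v)$. The high $\infty$-rank assumption is precisely that $G_\infty$ has real rank at least $2$, so the superrigidity theorem applies. By Borel density, each $\Gamma_i$ is Zariski dense in $\tilde{\mathbf{G}}$, and the hypothesis $\Gamma_i \cap \mathbf{C}(k) = \{1\}$ says they also embed faithfully into the adjoint form $\mathbf{G}(k)$, removing the central ambiguity that normally plagues such rigidity statements. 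Applying Margulis' superrigidity (Chapter VII of Margulis' book) to the composition $\Gamma_1 \xrightarrow{\varphi} \Gamma_2 \hookrightarrow G_\infty$ gives a continuous extension $\tilde{\varphi}: G_\infty \to G_\infty$. Running the same argument on $\varphi^{-1}$ and using the uniqueness of the extension (from Zariski density of $\Gamma_1$), $\tilde{\varphi}$ is in fact a topological group automorphism of $G_\infty$.

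The second step is to identify such automorphisms algebraically. Since each factor $\tilde{\mathbf{G}}(k_v)$ is almost simple, and these are the unique non-trivial connected almost-simple normal subgroups of $G_\infty$, $\tilde{\varphi}$ must permute the factors according to some bijection $\sigma: V_\infty(k) \to V_\infty(k)$. On each factor, one obtains a continuous isomorphism $\tilde{\mathbf{G}}(k_v) \to \tilde{\mathbf{G}}(k_{\sigma(v)})$; by the standard Borel--Tits description of automorphisms of simple real/complex Lie groups, this is the composition of a continuous field isomorphism $\tau_v: k_v \to k_{\sigma(v)}$ with a $k_{\sigma(v)}$-algebraic automorphism (the triviality of the centre on our lattices means there is no central twist to worry about). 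Continuous isomorphisms between completions of a number field are induced by a single abstract automorphism of $k$ (permuting the archimedean places via $v \mapsto v \circ \sigma^{-1}$), and packaging everything together yields the desired pair $(\Phi, \sigma)$ with $\sigma \in \mathrm{Aut}(k)$ and $\Phi$ a $k$-automorphism of $\tilde{\mathbf{G}}$, such that $\varphi = \Phi \circ \sigma^0$ on $\Gamma_1$.

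The main obstacle will be verifying that the output really descends to data defined over $k$, rather than merely to an analytic automorphism of some completion. This is ensured by the fact that $\varphi$ carries the $k$-rational subgroup $\Gamma_1$ into $\tilde{\mathbf{G}}(k)$, combined with Zariski density of $\Gamma_1$, which forces any $\bar{k}$-morphism agreeing with $\Phi \circ \sigma^0$ on $\Gamma_1$ to in fact be defined over $k$. Uniqueness follows in the same spirit: if $\Phi \circ \sigma^0 = \Phi' \circ \sigma'^0$ on $\Gamma_1$, one first pins down $\sigma = \sigma'$ by comparing the induced Galois actions on the trace field of $\Gamma_1$, and then Zariski density of $\Gamma_1$ forces $\Phi = \Phi'$ as $k$-algebraic automorphisms of $\tilde{\mathbf{G}}$.
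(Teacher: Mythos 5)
Your approach is genuinely different from the paper's. The paper applies a version of superrigidity (Margulis, Theorem~VIII.3.6.(ii)) that directly produces a $\mathbb{Q}$-algebraic automorphism of $\mathrm{Res}_{k/\mathbb{Q}}\mathbf{G}$, and then quotes the structure theorem for automorphisms of a Weil restriction (\cite[Proposition A.5.14]{CGP15}) to factor it as $\Phi\circ\sigma^{0}$ with $\Phi\in\mathrm{Aut}_{k}(\mathbf{G})$ and $\sigma\in\mathrm{Aut}(k)$. You instead apply the more basic Lie-theoretic form of superrigidity to obtain a continuous automorphism of $G_{\infty}=\prod_{v\in V_{\infty}(k)}\tilde{\mathbf{G}}(k_{v})$ and then try to reassemble the $k$-data place by place.

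The genuine gap is in your second paragraph, at the step where you assert that the collection of continuous field isomorphisms $\tau_{v}\colon k_{v}\to k_{\sigma(v)}$, one per archimedean place, ``are induced by a single abstract automorphism of $k$.'' This is false as a free-standing statement: for example, if $k=\mathbb{Q}(\sqrt[3]{2})$ (one real and one complex place, $\mathrm{Aut}(k)=1$), taking $\tau_{v}=\mathrm{id}$ at the real place and $\tau_{v}=$ complex conjugation at the complex place gives a perfectly good family of continuous field isomorphisms fixing each place, yet it is induced by no automorphism of $k$. What rules out such examples here is precisely that your automorphism of $G_{\infty}$ must carry the diagonally embedded $\tilde{\mathbf{G}}(k)$ (or at least $\Gamma_{1}$, and then its Zariski closure) back into the diagonal; this forces compatibility of the $\tau_{v}$'s with the $k$-structure. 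You do gesture at a rationality constraint in your third paragraph, but you use it only to argue that $\Phi$ is defined over $k$, \emph{assuming} you already have a candidate $\Phi\circ\sigma^{0}$ — the globalization of the $\tau_{v}$'s into a single $\sigma\in\mathrm{Aut}(k)$ is logically prior to that and is left unjustified. This gluing is exactly what the paper's reduction to $\mathrm{Res}_{k/\mathbb{Q}}\mathbf{G}$ and the cited CGP proposition handle in one stroke. If you wish to salvage your route, the cleanest fix is to observe that the continuous extension preserves the diagonal copy of $\tilde{\mathbf{G}}(k)$ and then invoke the Borel–Tits description of \emph{abstract} automorphisms of $\tilde{\mathbf{G}}(k)$ over the number field $k$ directly, which packages the field automorphism and the $k$-algebraic automorphism together without any per-place bookkeeping.
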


\begin{proof}
Identifying $\Gamma_{1}$ and $\Gamma_{2}$ via the universal covering map $\pi:\tilde{\mathbf{G}}\to \mathbf{G}$ as arithmetic subgroups of the adjoint group $\mathbf{G}(k)$, Margulis' superrigidity \cite[Theorem VIII.3.6.(ii)]{Ma91} implies that the isomorphism $\varphi$ can be extended to an automorphism of $Res_{k/\mathbb{Q}}\mathbf{G}$. By the properties of the restriction of scalars functor, such an automorphism must be of the form $\Phi \circ \sigma^{0}$ for a $k$-automorphism $\Phi$ of $\mathbf{G}$ and an automorphism $\sigma$ of the field $k$ \cite[Proposition A.5.14]{CGP15}. Moreover, the $k$-automorphism $\Phi$ of the adjoint form $\mathbf{G}$ can be interpreted as a $k$-automorphism of the universal form $\tilde{\mathbf{G}}$, hence the assertion of the theorem.
\end{proof}

If $\sigma$ is an automorphism of $k$, it induces a permutation of the (finite) places of $k$, and thus an automorphism of the adelic group $\tilde{\mathbf{G}}(\mathbb{A}_{k}^{f})$ by permuting its factors according to $\sigma$, call this automorphism $\sigma_{\mathbb{A}}^{0}$. If $\Phi$ is a $k$-automorphism of $\tilde{\mathbf{G}}$, it induces a unique $k_{v}$-automorphism $\Phi_{v}:\tilde{\mathbf{G}}(k_{v})\to \tilde{\mathbf{G}}(k_{v})$ for every finite place $v$ of $k$ and the product $(\Phi_{v})_{v}:\prod_{v}\tilde{\mathbf{G}}(k_{v})\to \prod_{v}\tilde{\mathbf{G}}(k_{v})$ restricts to an automorphism $\Phi_{\mathbb{A}}:\tilde{\mathbf{G}}(\mathbb{A}_{k}^{f})\to \tilde{\mathbf{G}}(\mathbb{A}_{k}^{f})$ \cite[\S5]{PR93}.  Clearly $\Phi(\sigma^{0}(\gamma))=\Phi_{\mathbb{A}}(\sigma_{\mathbb{A}}^{0}(\gamma))$ for every $\gamma \in \tilde{\mathbf{G}}(k)$ (We identify the group of rational points $\tilde{\mathbf{G}}(k)$ with its diagonal embedding in the group of adelic points $\tilde{\mathbf{G}}(\mathbb{A}_{k}^{f})$) and is unique with this property. We thus get the following corollary:

\begin{Corollary}\label{cor~mar~sup}
    Under the assumptions of the previous theorem. If $\varphi:\Gamma_{1}\to \Gamma_{2}$ is an isomorphism. Then there exist unique automorphisms $\Phi_{\mathbb{A}}$ and $\sigma_{\mathbb{A}}^{0}$ of the adelic group $\tilde{\mathbf{G}}(\mathbb{A}_{k}^{f})$, such that $\sigma_{\mathbb{A}}^{0}$ is induced from an automorphism of $k$, and $\Phi_{\mathbb{A}}$ is induced from a $k$-automorphism of $\tilde{\mathbf{G}}$ with $\varphi(\gamma)=\Phi_{\mathbb{A}}(\sigma_{\mathbb{A}}^{0}(\gamma))$ for every $\gamma\in \Gamma_{1}$.
\end{Corollary}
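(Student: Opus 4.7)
The plan is to apply Theorem \ref{Mar~Sup~Rig} to obtain the $k$-automorphism $\Phi$ of $\tilde{\mathbf{G}}$ and the field automorphism $\sigma$ of $k$ with $\varphi(\gamma) = \Phi(\sigma^0(\gamma))$ on $\Gamma_1$, and then promote both to the adelic group using the two constructions recalled in the paragraph immediately preceding the corollary. No further rigidity input is required; the content is essentially a reformulation of the superrigidity statement in adelic language.

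First I would define $\sigma_{\mathbb{A}}^0$ as the automorphism of $\tilde{\mathbf{G}}(\mathbb{A}_k^f)$ induced by the permutation of $V_f(k)$ attached to $\sigma$, using the canonical $\sigma$-equivariant isomorphisms $\tilde{\mathbf{G}}(k_v) \cong \tilde{\mathbf{G}}(k_{\sigma(v)})$; this respects the restricted product because $\sigma$ sends $\mathcal{O}_{k,v}$ to $\mathcal{O}_{k,\sigma(v)}$. Next I would set $\Phi_{\mathbb{A}} = (\Phi_v)_v$ to be the product of the local extensions of $\Phi$ to each $\tilde{\mathbf{G}}(k_v)$; since $\Phi$ is defined over $k$, the $\Phi_v$ preserve $\tilde{\mathbf{G}}(\mathcal{O}_{k,v})$ at almost every $v$, so this product restricts to an honest automorphism of $\tilde{\mathbf{G}}(\mathbb{A}_k^f)$, as recorded in \cite[\S5]{PR93}.

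For compatibility on $\Gamma_1$, note that under the diagonal embedding $\tilde{\mathbf{G}}(k) \hookrightarrow \tilde{\mathbf{G}}(\mathbb{A}_k^f)$ both $\Phi_{\mathbb{A}}$ and $\sigma_{\mathbb{A}}^0$ restrict to $\Phi$ and $\sigma^0$ respectively: a rational point $\gamma$ has all coordinates equal to $\gamma$, and applying $\Phi_v$ (respectively the $\sigma$-permutation) in each coordinate reproduces $\Phi(\gamma)$ (respectively $\sigma^0(\gamma)$) diagonally. Hence $\Phi_{\mathbb{A}}(\sigma_{\mathbb{A}}^0(\gamma)) = \Phi(\sigma^0(\gamma)) = \varphi(\gamma)$ for every $\gamma \in \Gamma_1$.

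For uniqueness, if $(\Phi'_{\mathbb{A}}, \tau_{\mathbb{A}}^0)$ is another pair of the prescribed type then, by hypothesis, $\tau_{\mathbb{A}}^0$ and $\Phi'_{\mathbb{A}}$ come from a field automorphism $\tau$ of $k$ and a $k$-automorphism $\Phi'$ of $\tilde{\mathbf{G}}$. Restricting the identity $\Phi'_{\mathbb{A}}(\tau_{\mathbb{A}}^0(\gamma)) = \varphi(\gamma)$ to the diagonal image of $\Gamma_1$ produces another Margulis pair $(\Phi', \tau)$ realizing $\varphi$, so the uniqueness clause of Theorem \ref{Mar~Sup~Rig} forces $\Phi' = \Phi$ and $\tau = \sigma$, whence $\Phi'_{\mathbb{A}} = \Phi_{\mathbb{A}}$ and $\tau_{\mathbb{A}}^0 = \sigma_{\mathbb{A}}^0$. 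The only mild technical point in the whole argument is verifying that the $\Phi_v$ assemble into an automorphism of the restricted product rather than merely of the full product, which is precisely what the reference \cite[\S5]{PR93} supplies.
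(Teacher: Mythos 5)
Your proof is correct and follows the same route the paper takes: the corollary is obtained by applying Theorem \ref{Mar~Sup~Rig} and then promoting $(\Phi,\sigma)$ to the adelic automorphisms $(\Phi_{\mathbb{A}},\sigma_{\mathbb{A}}^0)$ exactly as in the paragraph immediately preceding the corollary, with compatibility on the diagonal image of $\tilde{\mathbf{G}}(k)$ and uniqueness inherited from the uniqueness clause of the superrigidity theorem. The paper gives no separate proof beyond that paragraph, and your write-up is a faithful fleshing-out of it.
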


We will also need an adelic version of Margulis' superrigidity stated and proven by Kammeyer and Kionke \cite[Theorem 3.2]{KK21}:
\begin{Theorem}\label{Ade~Sup~Rig}
Let $\mathbf{G}$ be a connected, absolutely almost simple $\mathbb{Q}$-linear algebraic group of high $\infty$-rank and $\Gamma\subseteq \mathbf{G}(\mathbb{Q})$ an arithmetic subgroup. If $\varphi:\Gamma \to \mathbf{G}(\mathbb{A}_{\mathbb{Q}}^{f})$ is a homomorphism such that $\overline{\varphi(\Gamma)}$ has non-empty interior, then there exist a homomorphism of adelic groups $\eta:\mathbf{G}(\mathbb{A}_{\mathbb{Q}}^{f}) \to \mathbf{G}(\mathbb{A}_{\mathbb{Q}}^{f})$, and a group homomorphism $\nu:\Gamma \to \mathcal{Z}(\mathbf{G})(\mathbb{A}_{\mathbb{Q}}^{f})$ with finite image such that $\varphi(\gamma)=\nu(\gamma)\eta(\gamma)$ for all $\gamma\in \Gamma$. Moreover, $\eta$ and $\nu$ are uniquely determined by this condition.
\end{Theorem}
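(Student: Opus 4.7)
The plan is to apply Margulis' superrigidity prime by prime and then patch the local data into a single adelic homomorphism. For each finite prime $p$, write $\pi_{p}:\mathbf{G}(\mathbb{A}_{\mathbb{Q}}^{f})\to \mathbf{G}(\mathbb{Q}_{p})$ for the projection and set $\varphi_{p}:=\pi_{p}\circ\varphi$.

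First I would observe that $\overline{\varphi(\Gamma)}$ is a closed subgroup with non-empty interior, hence open in $\mathbf{G}(\mathbb{A}_{\mathbb{Q}}^{f})$. By the restricted-product topology it must contain an open subgroup of the form $\prod_{p\in S}V_{p}\times \prod_{p\notin S}\mathbf{G}(\mathbb{Z}_{p})$ for some finite set of primes $S$ and open compact subgroups $V_{p}\subseteq \mathbf{G}(\mathbb{Q}_{p})$. Consequently $\overline{\varphi_{p}(\Gamma)}$ is open in $\mathbf{G}(\mathbb{Q}_{p})$ for every $p$, and since any $p$-adic open subset of the Zariski-irreducible variety $\mathbf{G}$ is Zariski dense, $\varphi_{p}(\Gamma)$ is Zariski dense in $\mathbf{G}$. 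I would then apply the non-archimedean analogue of Margulis' superrigidity to each $\varphi_{p}$ (Theorem \ref{Mar~Sup~Rig} handles the archimedean case; the $p$-adic version, with a central twist accommodating the fact that $\mathbf{G}$ need not be simply connected, is standard). This produces, for each $p$, a unique $\mathbb{Q}_{p}$-rational homomorphism $\eta_{p}:\mathbf{G}(\mathbb{Q}_{p})\to \mathbf{G}(\mathbb{Q}_{p})$ and a unique homomorphism $\nu_{p}:\Gamma \to \mathcal{Z}(\mathbf{G})(\mathbb{Q}_{p})$ of finite image such that $\varphi_{p}(\gamma)=\nu_{p}(\gamma)\eta_{p}(\gamma)$ for every $\gamma\in\Gamma$.

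The hard part is to show that these families glue into honest adelic data: an endomorphism $\eta$ of $\mathbf{G}(\mathbb{A}_{\mathbb{Q}}^{f})$ and a homomorphism $\nu:\Gamma\to \mathcal{Z}(\mathbf{G})(\mathbb{A}_{\mathbb{Q}}^{f})$ of finite image. Concretely, one needs that for almost all $p$ the map $\eta_{p}$ preserves $\mathbf{G}(\mathbb{Z}_{p})$, and that $\nu_{p}$ is trivial on a fixed finite-index subgroup of $\Gamma$. My plan is to fix a finite generating set $\gamma_{1},\ldots,\gamma_{m}$ of $\Gamma$ and to use $\varphi(\gamma_{i})\in \mathbf{G}(\mathbb{A}_{\mathbb{Q}}^{f})$ to conclude $\varphi_{p}(\gamma_{i})\in \mathbf{G}(\mathbb{Z}_{p})$ for each $i$ and all $p$ outside some finite exceptional set $T$. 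Combining this integrality with the uniqueness of the Margulis decomposition and the Zariski density of $\varphi_{p}(\Gamma)$, I would argue that for every $p\notin T$ the map $\eta_{p}$ must preserve the standard integral model and $\nu_{p}$ must be trivial on a suitable congruence subgroup, yielding a well-defined adelic $\eta=\prod_{p}\eta_{p}$ together with a central character $\nu$ of finite image. Finally, uniqueness of $\eta$ and $\nu$ descends directly from the local uniqueness at each prime: any two adelic decompositions of $\varphi$ project at every $p$ to decompositions of $\varphi_{p}$, which must then agree prime by prime.
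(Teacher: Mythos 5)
The paper cites this theorem from Kammeyer--Kionke \cite[Theorem 3.2]{KK21} rather than proving it, so there is no in-paper argument to compare against.

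Your sketch has a genuine gap at its load-bearing step. You invoke ``the non-archimedean analogue of Margulis' superrigidity'' to produce, for each $p$, a $\mathbb{Q}_p$-rational $\eta_p$ and a finite-image central twist $\nu_p$ with $\varphi_p=\nu_p\eta_p$. But Margulis' superrigidity over a non-archimedean local field yields a rational extension only when the image is Zariski dense \emph{and not relatively compact}. Here $\Gamma$ is finitely generated, so $\varphi_p(\Gamma)\subseteq\mathbf{G}(\mathbb{Z}_p)$, hence bounded, for all but finitely many $p$; and if $\overline{\varphi(\Gamma)}$ is compact open --- which is perfectly consistent with the hypothesis of nonempty interior, e.g.\ when $\varphi$ is the canonical embedding into $\prod_p\mathbf{G}(\mathbb{Z}_p)$ --- the image is bounded at \emph{every} $p$ and the superrigidity theorem you invoke produces nothing at all. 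A bounded, Zariski dense homomorphism $\Gamma\to\mathbf{G}(\mathbb{Z}_p)$ is not a priori congruence-continuous, and showing that it decomposes as $\nu_p\eta_p$ is precisely the content of the theorem; it cannot be dismissed as a ``standard'' local application of superrigidity. Your appeal to Theorem~\ref{Mar~Sup~Rig} is also off-target, since that statement governs isomorphisms between two arithmetic subgroups via $Res_{k/\mathbb{Q}}$, not representations of $\Gamma$ into $p$-adic groups. Finally, the gluing step --- that almost every $\eta_p$ preserves $\mathbf{G}(\mathbb{Z}_p)$ and almost every $\nu_p$ kills a fixed finite-index subgroup --- is asserted but not argued; integrality of $\varphi(\gamma_i)$ for a finite generating set does not, on its own, force the abstract endomorphism $\eta_p$ to respect the integral model.
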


\subsection{Profinite groups and the congruence subgroup property}\label{Pro~Grps}

A family $(\{G_{i}\}_{i\in I},\{\phi_{i,j}\}_{i\geq j \in I})$ is an inverse system of finite groups over the directed set $I$ if the $G_{i}$'s are finite groups, $\phi_{i,j}:G_{i}\to G_{j}$ are homomorphisms of groups whenever $i\geq j$ such that $\phi_{i,k}=\phi_{j,k}\phi_{i,j}$ for every $i\geq j \geq k$ and $\phi_{ii}=id_{G_{i}}$ $\forall i$. A group $G$ is called profinite if it is the inverse limit of an inverse system of finite groups over some directed set. A profinite group is a compact, Hausdorff, totally disconnected topological group, a map of profinite groups is a continuous homomorphism of groups.

\begin{Example}[Profinite completion]
    Let $\Gamma$ be a finitely generated group, let $\mathcal{N}$ be the set of finite index normal subgroups of $\Gamma$, for $M,N\in \mathcal{N}$, declare that $M\leq N$ whenever $N\subseteq M$, it is a directed set. Consider the natural quotient homomorphisms $\phi_{N,M}:\Gamma/N \to \Gamma/M$, then the profinite group $\widehat{\Gamma}=\varprojlim_{N\in \mathcal{N}} \Gamma/N$, is called the \textbf{profinite completion} of $\Gamma$.
\end{Example}
The profinite completion $\widehat{\Gamma}$ and the set $\mathcal{C}(\Gamma)$ of isomorphism classes of the finite quotients of $\Gamma$ hold the same information in following manner:

\begin{Theorem}\cite[Theorems 3.2.2 \& 3.2.7]{RZ00} 
    If $\Gamma$ and $\Lambda$ are two finitely generated residually finite groups then $\mathcal{C}(\Gamma)=\mathcal{C}(\Lambda)$ if and only if $\widehat{\Gamma} \cong \widehat{\Lambda}$
\end{Theorem}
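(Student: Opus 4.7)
The forward direction ($\widehat{\Gamma}\cong\widehat{\Lambda}\Rightarrow\mathcal{C}(\Gamma)=\mathcal{C}(\Lambda)$) is straightforward. Since $\Gamma$ is finitely generated and residually finite, the canonical map $\iota_\Gamma\colon\Gamma\hookrightarrow\widehat{\Gamma}$ is injective with dense image, and precomposition with $\iota_\Gamma$ sets up a bijection between (abstract) surjections $\Gamma\twoheadrightarrow Q$ onto a finite group $Q$ and continuous surjections $\widehat{\Gamma}\twoheadrightarrow Q$. Hence $\mathcal{C}(\Gamma)=\mathcal{C}(\widehat{\Gamma})$, and similarly $\mathcal{C}(\Lambda)=\mathcal{C}(\widehat{\Lambda})$, so any topological isomorphism $\widehat{\Gamma}\cong\widehat{\Lambda}$ yields $\mathcal{C}(\Gamma)=\mathcal{C}(\Lambda)$.

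For the converse, the plan is first to reduce to showing that two topologically finitely generated profinite groups $G$ and $H$ with $\mathcal{C}(G)=\mathcal{C}(H)$ are isomorphic. Given such $G$, for each integer $n\geq 1$ set
\[
\Phi_n(G):=\bigcap\bigl\{U\trianglelefteq G\text{ open}:[G:U]\leq n\bigr\}.
\]
Topological finite generation ensures only finitely many such $U$ exist, so $\Phi_n(G)$ is itself open of finite index, and $G=\varprojlim_n G/\Phi_n(G)$. The central claim is that the isomorphism class of $Q_n(G):=G/\Phi_n(G)$ is intrinsically determined by the set $\mathcal{C}(G)$: it is the unique $Q\in\mathcal{C}(G)$ satisfying $\Phi_n(Q)=\{1\}$ that has every other such element of $\mathcal{C}(G)$ as an abstract quotient. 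Indeed, if $Q'=G/N\in\mathcal{C}(G)$ and $\Phi_n(Q')=\{1\}$, then $N$ is an intersection of open normal subgroups of $G$ of index $\leq n$, hence $N\supseteq\Phi_n(G)$ and $Q'$ is a quotient of $Q_n(G)$. In particular $\mathcal{C}(G)=\mathcal{C}(H)$ forces $Q_n(G)\cong Q_n(H)$ for every $n$.

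To glue these partial isomorphisms, I would let $X_n$ be the (finite, non-empty) set of isomorphisms $Q_n(G)\to Q_n(H)$. For $m\leq n$, one checks $\Phi_m(Q_n(G))=\Phi_m(G)/\Phi_n(G)$, so any $\alpha\in X_n$ carries this characteristic subgroup onto $\Phi_m(Q_n(H))$ and descends to an element of $X_m$, producing an inverse system of finite non-empty sets. A standard K\"onig/compactness argument then yields $\varprojlim_n X_n\neq\emptyset$, and any coherent sequence therein assembles into the desired topological isomorphism $G=\varprojlim_n Q_n(G)\cong\varprojlim_n Q_n(H)=H$.

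The key obstacle is the passage from the coarse datum $\mathcal{C}(\Gamma)=\mathcal{C}(\Lambda)$ to a compatible system of isomorphisms between finite quotients; this is precisely what the intrinsic characterization of $Q_n(G)$ accomplishes, and finite generation enters essentially in guaranteeing both that $\Phi_n(G)$ has finite index and that a maximal element exists within $\mathcal{C}(G)$ subject to $\Phi_n(Q)=\{1\}$. Once that characterization is in place, the remainder of the argument is a formal compactness exercise.
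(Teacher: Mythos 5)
Your argument is correct, and it reconstructs the standard proof in the cited reference; the paper itself does not reprove this result but simply invokes \cite[Theorems 3.2.2 \& 3.2.7]{RZ00}. Both directions are handled soundly: the forward direction via the universal property of the profinite completion (which identifies $\mathcal{C}(\Gamma)$ with the set of continuous finite quotients of $\widehat{\Gamma}$), and the converse via the characteristic open subgroups $\Phi_n(G)=\bigcap\{U\trianglelefteq G\text{ open}:[G:U]\le n\}$, the intrinsic characterization of $Q_n(G)=G/\Phi_n(G)$ as the $\Phi_n$-trivial element of $\mathcal{C}(G)$ dominating all others, and a compactness/K\"onig argument on the nonempty finite sets $X_n$ of isomorphisms. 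The one place that deserves an explicit word is the claim that $\Phi_n(G)$ is open: this rests on the fact that a topologically $d$-generated profinite group has only finitely many open normal subgroups of index $\le n$ (there are finitely many isomorphism types of groups of order $\le n$, and at most $|F|^{d}$ continuous homomorphisms to any fixed finite $F$), which in turn uses that $\widehat{\Gamma}$ inherits topological finite generation from $\Gamma$. You state this but do not justify it; it is the point where finite generation is genuinely used. Likewise it is worth noting that $\{\Phi_n(G)\}_n$ is cofinal among open normal subgroups (any $U$ of index $n$ contains $\Phi_n(G)$), which is what makes $G\cong\varprojlim_n Q_n(G)$; you use this implicitly. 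With those two remarks spelled out, the proof is complete and is essentially the argument of Ribes--Zalesskii.
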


There is a natural map $\iota:\Gamma \to \widehat{\Gamma}$ given by $\gamma\mapsto (\gamma N)_{N}$, this map is injective if and only if $\Gamma$ is residually finite, in this case we identify $\Gamma$ with its image $\iota(\Gamma)$. The pair $(\widehat{\Gamma},\iota)$ satisfies a universal property: $\iota(\Gamma)$ is dense in $\widehat{\Gamma}$, and for every profinite group $P$, and every homomorphism $\varphi:\Gamma\to P$, there exists a unique homomorphism of profinite groups $\hat{\varphi}:\widehat{\Gamma} \to P$ such that $\widehat{\phi} \circ \iota = \varphi$.

There is a strong connection between the finite index subgroups of $\widehat{\Gamma}$ and those of $\Gamma$:

\begin{Proposition}\cite[Proposition 3.2.2]{RZ00}\label{Pro~Corre}
    Let $\Gamma$ be a finitely generated residually finite group, then there is a one-to-one correspondence between the set $\mathcal{X}$ of all finite index subgroups of $\Gamma$ and the set $\mathcal{Y}$ of all finite index subgroup of $\widehat{\Gamma}$, given by
    \begin{gather*}
        X \mapsto \overline{X}, \quad X \in \mathcal{X} \\
        Y \mapsto Y\cap \Gamma, \quad Y\in \mathcal{Y}
    \end{gather*}
    where $\overline{X}$ denote the closure of $X$ in $\widehat{\Gamma}$. Moreover, this bijection preserves normality, index and quotients. 
\end{Proposition}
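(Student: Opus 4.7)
The plan is to build the two maps $X \mapsto \overline{X}$ and $Y \mapsto Y \cap \Gamma$ and verify they are mutually inverse, relying on the density of $\iota(\Gamma)$ in $\widehat{\Gamma}$ and the universal property of the profinite completion. The crucial technical input is that every finite index subgroup of $\widehat{\Gamma}$ is automatically open: since $\Gamma$ is finitely generated, $\widehat{\Gamma}$ is topologically finitely generated, and this follows from the Nikolov--Segal theorem. Identifying this as the main obstacle --- without it, one would only obtain the correspondence for open finite index subgroups of $\widehat{\Gamma}$ --- the rest of the argument is essentially formal.

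For $X \in \mathcal{X}$, I first reduce to the normal case by passing to the normal core $N$ of $X$ in $\Gamma$. The finite quotient $\Gamma / N$ is in particular profinite, so by the universal property the quotient map extends uniquely to a continuous surjection $\widehat{\Gamma} \to \Gamma / N$, whose kernel is closed, of finite index, and contains $\iota(N)$; by density of $\iota(\Gamma)$ the kernel must equal $\overline{N}$, yielding $\widehat{\Gamma}/\overline{N} \cong \Gamma/N$. Under this isomorphism, $\overline{X}/\overline{N}$ corresponds to $X/N$, so $\overline{X}$ has finite index in $\widehat{\Gamma}$ with $[\widehat{\Gamma}:\overline{X}] = [\Gamma:X]$. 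Conversely, for $Y \in \mathcal{Y}$, openness of $Y$ furnishes an open normal subgroup $Y_N \subseteq Y$ of finite index, and the composition $\Gamma \to \widehat{\Gamma} \to \widehat{\Gamma}/Y_N$ has dense --- hence, by finiteness of the target, surjective --- image with kernel $Y_N \cap \Gamma$. This shows $Y \cap \Gamma$ has finite index in $\Gamma$, equal to $[\widehat{\Gamma}:Y]$.

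The two maps are mutually inverse. Clearly $X \subseteq \overline{X} \cap \Gamma$, and both have the same index in $\Gamma$ by the previous step, so they coincide; dually $\overline{Y \cap \Gamma} \subseteq Y$ because $Y$ is closed, while density of $\iota(\Gamma)$ in every open coset of $Y_N$ contained in $Y$ forces equality. Preservation of normality is immediate from continuity of conjugation in $\widehat{\Gamma}$ (closure of a normal subgroup is normal) and from the fact that intersecting a normal subgroup with $\Gamma$ preserves normality. Preservation of index was established above, and the isomorphism of quotients $\Gamma/X \cong \widehat{\Gamma}/\overline{X}$ for normal $X$ is exactly the identification delivered by the universal property argument in the second paragraph, completing the proof.
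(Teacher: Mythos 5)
The paper offers no proof of this proposition; it simply cites \cite[Proposition 3.2.2]{RZ00}, so there is no in-paper argument to compare against. Your proof is correct. In fact, you spot and repair a real gap in the citation: Ribes--Zalesskii state the correspondence between finite-index subgroups of $\Gamma$ and \emph{open} subgroups of $\widehat{\Gamma}$, whereas the paper asserts it for all \emph{finite-index} subgroups of $\widehat{\Gamma}$. These classes coincide because $\widehat{\Gamma}$ is topologically finitely generated, but that is exactly the Nikolov--Segal theorem, and you are right to isolate it as the only non-formal input. The remainder of your argument --- extending $\Gamma\to\Gamma/N$ via the universal property, identifying the kernel with $\overline{N}$, comparing indices in both directions, and closing the loop with the two containments $X\subseteq \overline{X}\cap\Gamma$ and $\overline{Y\cap\Gamma}\subseteq Y$ --- is the standard and correct route. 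One small point worth spelling out: in the step that shows $\ker(\widehat{\Gamma}\to\Gamma/N)=\overline{N}$, the density argument really goes through the intermediate fact that this kernel $K$ is open and that $K\cap\Gamma=N$ (by uniqueness of the extension restricted to $\Gamma$); your phrase ``by density of $\iota(\Gamma)$ the kernel must equal $\overline{N}$'' compresses this slightly. Everything else, including the normality and quotient preservation, is sound.
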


\begin{Example}[Congruence completion]
    Let $\Gamma$ be an arithmetic subgroup of $\tilde{\mathbf{G}}(k)$. Consider the set $\mathcal{C}$ of all congruence subgroups, i.e. subgroups that contain  $\Gamma[\mathcal{I}] := \Gamma \cap \ker(\phi_{\mathcal{I}}:\tilde{\mathbf{G}}(\mathcal{O}_{k})\to \tilde{\mathbf{G}}(\mathcal{O}_{k}/\mathcal{I}))$ for some ideal $\mathcal{I}\triangleleft \mathcal{O}_{k}$, where $\phi_{\mathcal{I}}$ is the reduction map $\mod \mathcal{I}$. As in the profinite completion, $\mathcal{C}$ is a directed set by the inverse of inclusion, and one can form the \textbf{Congruence completion} $\overline{\Gamma}$ of $\Gamma$ with respect to this inverse system. 
\end{Example}

Thus, there is a surjective map $\widehat{\Gamma} \to \overline{\Gamma}$ between the profinite completion and the congruence completion. Call $C(\Gamma)$, the kernel of this map, the \textbf{congruence kernel}. The group $\Gamma$ is said to have the \textbf{congruence subgroup property} if the congruence kernel $C(\Gamma)$ is a finite group. It is not difficult to see that the congruence subgroup property is actually a property of the ambient group $\tilde{\mathbf{G}}$ and the field $k$. \newline
It was conjectured by Serre \cite{Se70} that if $\text{rank}_{V_{\infty}(k)}(\tilde{\mathbf{G}})\geq 2$ and $\Gamma\subseteq \tilde{\mathbf{G}}(k)$ is an arithmetic subgroup then $C(\Gamma)$ is trivial or isomorphic to a subgroup of the roots of unity of $k$. The conjecture has been proven in many instances, including for example, all the isotropic cases \cite{Ra86} and all anisotropic groups of type $B_{n}$,$C_{n}$,$D_{n}$ (except for some triality forms of $D_{4}$), $E_{7}$,$E_{8}$,$F_{4}$ and $G_{2}$ (\cite[Ch.9]{PR93}, \cite{PR10}).

\subsection{A number theoretic lemma}

\begin{Lemma} \label{num~the~lem}
    Let $\tilde{\mathbf{G}}$ be a connected, simply connected, absolutely almost simple $k$-linear algebraic group. There exist infinitely many finite places $v\in V_f(k)$ such that $\tilde{\mathbf{G}}$ splits over $k_{v}$. Moreover, one can assume that for these places, $\mathbf{C}(\mathcal{O}_{v})=\mathbf{C}(\mathbb{C})$.
\end{Lemma}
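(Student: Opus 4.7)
My approach combines Chebotarev's density theorem with two ingredients from the structure theory of semisimple groups: (i) $\tilde{\mathbf{G}}$ splits over some finite Galois extension of $k$, and (ii) when a simply connected absolutely almost simple group is split, its scheme-theoretic center is a product of groups of roots of unity $\mu_{d_i}$ whose orders depend only on the Dynkin type.

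First, because $\tilde{\mathbf{G}}$ becomes split over $\bar k$, there is a finite Galois extension $L/k$ over which it becomes $L$-split; explicitly one may take $L$ to trivialize the $*$-action on the Dynkin diagram and to split a chosen maximal $k$-torus. Let $n$ denote the exponent of the finite abelian group $\mathbf{C}(\mathbb{C})$, a small integer depending only on the type (for instance $n = m+1$ for type $A_m$, $n = 2$ for types $B_m,C_m,E_7$, and so on), and set $L' := L\cdot k(\zeta_n)$, which is still a finite Galois extension of $k$.

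Next, apply Chebotarev's density theorem to $L'/k$: the set of finite places $v \in V_f(k)$ that split completely in $L'$ has positive Dirichlet density, hence is infinite. Discard the finitely many places at which the integral model of $\tilde{\mathbf{G}}$ coming from $\rho$ has bad reduction, the finitely many places ramified in $L'/k$, and the finitely many places of residue characteristic dividing $n$. For each of the infinitely many remaining places $v$, $L'_w \cong k_v$ for every $w\mid v$; in particular $\tilde{\mathbf{G}}$ splits over $k_v$ and $\mu_n \subset k_v$. Over such a $k_v$ the center $\mathbf{C}$ becomes a finite étale diagonalizable group, isomorphic to a product of $\mu_{d_i}$ with $d_i \mid n$, and since all $n$-th roots of unity in $k_v$ lie in $\mathcal{O}_v^\times$ (because $v \nmid n$), taking $\mathcal{O}_v$-points of this model yields $\mathbf{C}(\mathcal{O}_v) = \mathbf{C}(k_v) \cong \mathbf{C}(\mathbb{C})$.

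The only real obstacle is a mild bookkeeping one: verifying that, at such a $v$, the center identifies with its complex points not just over $k_v$ but integrally over $\mathcal{O}_v$. This reduces to the elementary fact that $\mu_m(\mathcal{O}_v) = \mu_m(k_v)$ whenever $v \nmid m$, which is a direct application of Hensel's lemma to $X^m - 1$ over $\mathcal{O}_v$. Everything else is a routine application of Chebotarev combined with the classification of centers of split simply connected groups.
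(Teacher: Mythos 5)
Your proof is correct and follows essentially the same route as the paper: choose a finite Galois extension of $k$ over which $\tilde{\mathbf{G}}$ splits and the center acquires all its geometric points, then apply Chebotarev and discard the finitely many bad places. Your version is slightly more explicit (taking $L' = L\cdot k(\zeta_n)$ and invoking the classification of centers of split simply connected groups), whereas the paper simply asserts the existence of a suitable $k'$ and uses the general fact that finitely many $k'$-points are integral at almost all places; also, your appeal to Hensel's lemma is not actually needed for $\mu_m(\mathcal{O}_v)=\mu_m(k_v)$, since any root of unity in $k_v$ is automatically a unit, but this does not affect correctness.
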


\begin{proof}
    There exists a finite Galois field extension $k'/k$ such that $\tilde{\mathbf{G}}$ splits over $k'$ and $\mathbf{C}(k')=\mathbf{C}(\mathbb{C})$. By Chebotarev's density theorem \cite[Corollary 13.6]{Ne13}, there exist infinitely many primes $\mathfrak{p}\subseteq \mathcal{O}_{k}$ that splits completely in $k'$. In particular, if such a prime lies under a prime $\mathfrak{p'}\subseteq \mathcal{O}_{k'}$ then $k_{v}\cong k'_{v'}$ where $v$ and $v'$ are the places corresponding to the primes $\mathfrak{p}$ and $\mathfrak{p'}$ respectively. Thus, for such a place $v$, $\tilde{\mathbf{G}}$ splits over $k_{v}$. \newline
    Moreover, as the centre $\mathbf{C}(k')$ is finite, for all but finitely many places, $\mathbf{C}(k')=\mathbf{C}({\mathcal{O}_{k',v'}})$. So there exist infinitely many places $v\in V_{f}(k)$ with $\mathbf{C}(\mathcal{O}_{k,v})=\mathbf{C}(\mathbb{C})$ and $\tilde{\mathbf{G}}$ splits over $k_{v}$.
\end{proof}

\section{ First part of the main theorem - Existence}

In this section we will prove the first part of theorem \ref{Main~The}, the existence part, it will follow from the two theorems below. 

\begin{Theorem}\label{Proof~A}
    Let $\Gamma \subseteq \tilde{\mathbf{G}}(k)$ be an arithmetic subgroup. Assume that $\tilde{\mathbf{G}}$ has the congruence subgroup property and type different then $E_{8},F_{4}$ or $G_{2}$. Then there exist two non-isomorphic finite index subgroups $\Gamma_{1},\Gamma_{2}\subseteq \Gamma$ with isomorphic profinite completions.
\end{Theorem}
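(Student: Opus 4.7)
The plan is to realise Method~A of the introduction in full generality. The exclusion of types $E_8$, $F_4$, $G_2$ means exactly that the centre $\mathbf{C}=\mathcal{Z}(\tilde{\mathbf{G}})$ is non-trivial; fix once and for all a non-trivial element $c\in\mathbf{C}(\mathbb{C})$. Using Lemma~\ref{num~the~lem} I pick two distinct finite places $v_1,v_2\in V_f(k)$ at which $\tilde{\mathbf{G}}$ splits, such that $\mathbf{C}(\mathcal{O}_{v_i})=\mathbf{C}(\mathbb{C})$ and the residue characteristic at $v_i$ is coprime to $|\mathbf{C}(\mathbb{C})|$ (so that $c$ is non-trivial modulo $\mathfrak{p}_{v_i}$); crucially, I also insist that $v_2$ lies outside the (finite) $\mathrm{Aut}(k)$-orbit $O_1:=\mathrm{Aut}(k)\cdot v_1$. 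This last condition is satisfiable because Lemma~\ref{num~the~lem} supplies infinitely many candidate places.

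Set $O_2:=\mathrm{Aut}(k)\cdot v_2$, and choose an ideal $\mathcal{I}_0\subseteq\mathcal{O}$, coprime to $O_1\cup O_2$, with $\tilde{\mathbf{G}}(\mathcal{O})[\mathcal{I}_0]\subseteq\Gamma$ and $\tilde{\mathbf{G}}(\mathcal{O})[\mathcal{I}_0]\cap\mathbf{C}(k)=1$. Let $\mathcal{I}:=\mathcal{I}_0\prod_{v\in O_1\cup O_2}\mathfrak{p}_v$ and $\Lambda:=\tilde{\mathbf{G}}(\mathcal{O})[\mathcal{I}]$. Using CSP, identify $\widehat{\Gamma}$ (up to a finite central kernel) with an open subgroup of $\prod_{v\in V_f(k)}\tilde{\mathbf{G}}(\mathcal{O}_v)$, and define $\rho_i\in\widehat{\Gamma}$ to be a lift of the adelic element whose $v_i$-component is $c$ and whose other components are trivial. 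Put $\Gamma_i:=\langle\widehat{\Lambda},\rho_i\rangle\cap\Gamma$; by Proposition~\ref{Pro~Corre} we have $\widehat{\Gamma_i}=\langle\widehat{\Lambda},\rho_i\rangle$. Because $\rho_i$ is central of order $|c|$ and its $v_i$-component does not lie in the principal congruence subgroup $\widehat{\Lambda}_{v_i}$ (by the residue-characteristic hypothesis), we get $\widehat{\Gamma_i}\cong\widehat{\Lambda}\times\langle c\rangle$ abstractly, independently of $i$; in particular $\widehat{\Gamma_1}\cong\widehat{\Gamma_2}$.

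For the non-isomorphism part, suppose $\varphi:\Gamma_1\to\Gamma_2$ is an isomorphism. By construction $\Gamma_i\cap\mathbf{C}(k)=1$, so Corollary~\ref{cor~mar~sup} gives $\varphi=\Phi_{\mathbb{A}}\circ\sigma^0_{\mathbb{A}}|_{\Gamma_1}$ for unique $\Phi_{\mathbb{A}}$ and $\sigma^0_{\mathbb{A}}$, and by density this extends to the isomorphism $\widehat{\varphi}:\widehat{\Gamma_1}\to\widehat{\Gamma_2}$. I now track $\rho_1$: the adelic permutation $\sigma^0_{\mathbb{A}}$ moves its support from $v_1$ to $\sigma(v_1)$, and the componentwise action of $\Phi_{\mathbb{A}}$ preserves the support, so $\widehat{\varphi}(\rho_1)$ is concentrated at $\sigma(v_1)$ with a non-trivial central value (since both $\Phi$ and $\sigma^0$ restrict to automorphisms of the finite centre $\mathbf{C}$). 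Now $\sigma(v_1)\in O_1$ and $O_1\cap\{v_2\}=\emptyset$ by construction, so the $\sigma(v_1)$-component of every element of $\widehat{\Gamma_2}$ must lie in $\widehat{\Lambda}_{\sigma(v_1)}$, the principal congruence subgroup modulo $\mathfrak{p}_{\sigma(v_1)}$; the latter contains no non-trivial central element. This contradicts $\widehat{\varphi}(\rho_1)\in\widehat{\Gamma_2}$, hence $\Gamma_1\not\cong\Gamma_2$.

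The principal obstacle is the freedom of the field automorphism $\sigma$ in Corollary~\ref{cor~mar~sup}: a priori $\sigma$ can permute the places of $k$ arbitrarily, and if it sent $v_1$ to $v_2$ it would transport $\rho_1$ onto something indistinguishable from $\rho_2$, aborting the argument. The combined use of (i) disjoint $\mathrm{Aut}(k)$-orbits for $v_1$ and $v_2$ and (ii) the insertion of the entire union $O_1\cup O_2$ into the modulus of $\Lambda$ is engineered precisely to trap every possible $\sigma(v_1)$ at a place where $\widehat{\Gamma_2}$ admits no non-trivial central element.
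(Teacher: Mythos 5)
Your proof realizes the same Method~A strategy as the paper's proof, and the overall argument is sound. Two small remarks. First, your ``disjoint $\mathrm{Aut}(k)$-orbits'' device is a mild variant of the paper's trick: the paper picks $\mathfrak{p},\mathfrak{q}$ over \emph{different rational primes} and then includes every place over those two rational primes in the modulus; since $\mathrm{Aut}(k)$ fixes $\mathbb{Q}$ it preserves the rational prime below any place, so ``different rational primes'' automatically gives disjoint orbits — your condition is the weaker sufficient condition, the paper's is the cleaner way to guarantee it and to make the ``$\sigma(\mathfrak{p})\mid p$'' step transparent. Second, the phrase ``identify $\widehat{\Gamma}$ (up to a finite central kernel) with an open subgroup of $\prod_v\tilde{\mathbf{G}}(\mathcal{O}_v)$'' is imprecise as stated: CSP as the paper assumes it only says the congruence kernel $C(\Gamma)$ is \emph{finite}, not central (centrality would invoke Serre's conjecture, which the paper carefully does not assume). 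Because you then treat $\rho_i$ as a genuine adelic element of order $|c|$ commuting with $\widehat{\Lambda}$, you need the congruence kernel to be absent, not just finite. The standard fix — and exactly what the paper does via Proposition~\ref{Pro~Corre} — is to pass first to a finite-index subgroup $\Gamma'\subseteq\Gamma$ whose closure $\widehat{\Gamma'}$ already equals the open subgroup $\Lambda\times\prod_{v\notin S}\tilde{\mathbf{G}}(\mathcal{O}_v)$ of $\tilde{\mathbf{G}}(\mathbb{A}_k^f)$; after that reduction, your $\rho_i$ really is central of order $|c|$, the decomposition $\widehat{\Gamma_i}\cong\widehat{\Lambda}\times\langle c\rangle$ is immediate, and the rest of your argument (tracking the support of $\rho_1$ under $\Phi_{\mathbb{A}}\circ\sigma^0_{\mathbb{A}}$ and deriving a contradiction from the centerless local factor at $\sigma(v_1)$) goes through unchanged. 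Also note that your residue-characteristic coprimality hypothesis is a stylistic choice allowing you to use the level-$1$ congruence subgroup at each $v\in O_1\cup O_2$; the paper simply allows arbitrary finite-index $\Delta_v$ with $\mathbf{C}(\mathcal{O}_v)\cap\Delta_v=\{1\}$, which dispenses with that hypothesis.
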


\begin{proof}
    (Following \descref{Method A}) Moving to a finite index subgroup, one can assume that there exists a finite set of primes $S$ and a compact open subgroup $\Lambda \subseteq \prod_{v\in S} \tilde{\mathbf{G}}(k_{v})$ commensurable with $\prod_{v\in S}\tilde{\mathbf{G}}(\mathcal{O}_{v})$ such that 
    \begin{gather}
        \widehat{\Gamma} = \overline{\Gamma} \cong \Lambda \times \prod_{v\notin S} \tilde{\mathbf{G}}(\mathcal{O}_{v}).
    \end{gather}
    Indeed, the congruence kernel $C(\Gamma) \subseteq \widehat{\Gamma}$ is finite, so one can find a finite index subgroup $\Gamma'^{P}\subseteq \widehat{\Gamma}$ of the above form, by proposition \ref{Pro~Corre}, there exists a finite index subgroup $\Gamma'\subseteq \Gamma$ such that $\widehat{\Gamma'} = \Gamma'^{P}$. Let $\pi:\tilde{\mathbf{G}} \to \mathbf{G}$ be the canonical central isogeny to the adjoint form $\mathbf{G}$, by moving again to a finite index subgroup one can assume that $\Gamma \cap \mathbf{C}(k) = \{1\}$, and $\widehat{\Gamma}$ is still of the same form as $(1)$. \newline
    Let $S_{Q}$ be the set of all rational primes lying under some valuation in $S$, and $S_{\text{full}}=\{ v : \, v|p \text{ for some } p \in S_{Q} \}$. 
    By lemma \ref{num~the~lem} one can find two valuations $\mathfrak{p},\mathfrak{q}\notin S_{\text{full}}$ lying over different rational primes $p$ and $q$ respectively and such that $\mathbf{C}(\mathcal{O}_{\mathfrak{p}})=\mathbf{C}(\mathcal{O}_{\mathfrak{q}})=\mathbf{C}(\mathbb{C})$, and thus also elements $1\neq \rho_{\mathfrak{p}}\in \mathbf{C}(\mathcal{O}_{\mathfrak{p}})$ and $1\neq \rho_{\mathfrak{q}}\in \mathbf{C}(\mathcal{O}_{\mathfrak{q}})$ of the same order. For every $v|p,q$, let $\Delta_{v}$ be a finite index subgroup of $\tilde{\mathbf{G}}(\mathcal{O}_{v})$ with $\mathbf{C}(\mathcal{O}_{v}) \cap \Delta_{v} = \{1\}$.
    Define $\epsilon_{\mathfrak{p}} = (\epsilon_{\mathfrak{p},w})_{w},\epsilon_{q} = (\epsilon_{\mathfrak{q},w})_{w} \in \widehat{\Gamma}$ where
    \[
    \epsilon_{\mathfrak{p},w} := \begin{cases} 1 & \quad \text{if } w \neq \mathfrak{p} \\ \rho_{\mathfrak{p}} & \quad \text{if } w=\mathfrak{p} \end{cases}, \quad 
    \epsilon_{\mathfrak{q},w} := \begin{cases} 1 & \quad \text{if } w \neq \mathfrak{q} \\ \rho_{\mathfrak{q}} & \quad \text{if } w=\mathfrak{q} \end{cases}
    \]
   Now we define the following subgroups of $\widehat{\Gamma}$:
    \begin{gather*}
        \Delta = \Lambda \times \prod_{v \notin S, \, v \nmid p, \, v\nmid q} \tilde{\mathbf{G}}(\mathcal{O}_{v}) \times \prod_{v|p \text{ or } v|q} \Delta_{v} \\
        \Delta_{1} = \langle \Delta,\epsilon_{\mathfrak{p}} \rangle \cong \Delta \times \langle \epsilon_{\mathfrak{p}} \rangle \\
        \Delta_{2} = \langle \Delta, \epsilon_{\mathfrak{q}} \rangle \cong \Delta \times \langle \epsilon_{\mathfrak{q}} \rangle 
    \end{gather*}
    Clearly $\Delta_{1}$ and $\Delta_{2}$ are isomorphic finite index subgroups of $\widehat{\Gamma}$. By proposition \ref{Pro~Corre}, there exist finite index subgroups $\Gamma_{i} \subseteq \Gamma$ with $\widehat{\Gamma_{i}} = \Delta_{i}$ for $i=1,2$. We will finish the proof by showing that $\Gamma_{1}$ and $\Gamma_{2}$ cannot be isomorphic. \newline
     Assume to the contrary that there exists an isomorphism $\varphi:\Gamma_{1} \to \Gamma_{2}$. By corollary \ref{cor~mar~sup} there exist unique adelic automorphisms $\Phi_{\mathbb{A}}$ and $\sigma_{\mathbb{A}}^{0}$ of $\tilde{\mathbf{G}}(\mathbb{A}_{k}^{f})$ such that $\sigma_{\mathbb{A}}^{0}$ is induced from an automorphism of $k$ and $\Phi_{\mathbb{A}}$ is induced from a $k$-automorphism of $\tilde{\mathbf{G}}$ such that $\varphi(\gamma)=\Phi_{\mathbb{A}}(\sigma_{\mathbb{A}}^{0}(\gamma))$ for every $\gamma\in \Gamma_{1}$, taking closures one deduces that $(\tilde{\varphi}_{\mathbb{A}}\circ \sigma_{\mathbb{A}})(\widehat{\Gamma_{1}}) = \widehat{\Gamma_{2}}$. On the other hand, the induced map between the profinite completions $\widehat{\varphi}:\widehat{\Gamma_{1}} \to \widehat{\Gamma_{2}}$ is unique with $\widehat{\varphi}(\gamma)=\varphi(\gamma)$ for every $\gamma \in \Gamma_{1}$. Thus, it must be that $\widehat{\varphi} \equiv (\tilde{\varphi}_{\mathbb{A}}\circ \sigma_{\mathbb{A}})|_{\widehat{\Gamma_{1}}}$. In particular, it implies that the $\mathfrak{p}$'th place of $\widehat{\Gamma_{1}}$ is mapped isomorphically onto the $\sigma(\mathfrak{p})$'th place of $\widehat{\Gamma_{2}}$. This is a contradiction since $\sigma(\mathfrak{p})|p$, so the $\sigma(\mathfrak{p})$'th place of $\widehat{\Gamma_{2}}$ is centerless, but the $\mathfrak{p}$'th place of $\widehat{\Gamma_{1}}$ has a non-trivial centre.
\end{proof}

\begin{Theorem}\label{Proof~C}
Let $\Gamma \subseteq \tilde{\mathbf{G}}(k)$ be an arithmetic subgroup. Assume further that $\tilde{\mathbf{G}}(k)$ has the congruence subgroup property and that $k$ is a number field of dimension $d\geq 2$ over $\mathbb{Q}$. Then there exist two non-isomorphic finite index subgroups $\Gamma_{1},\Gamma_{2}\subseteq \Gamma$ with isomorphic profinite completions.
\end{Theorem}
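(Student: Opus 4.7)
Following \descref{Method C}, my plan is to choose two rational primes $p \neq q$ that split completely in $k$, construct finite-index congruence subgroups $\Gamma_1, \Gamma_2$ of $\Gamma$ differing only in which prime above $p$ is distinguished, and rule out any abstract isomorphism between them via Corollary \ref{cor~mar~sup} together with the fact that $\operatorname{Aut}(k/\mathbb{Q})$ acts freely on the primes of $k$ above a completely split rational prime. As in the proof of Theorem \ref{Proof~A}, I first pass to a finite index subgroup of $\Gamma$ so that $\widehat{\Gamma} \cong \Lambda \times \prod_{v \notin S} \tilde{\mathbf{G}}(\mathcal{O}_v)$ with $\Lambda = \prod_{v \in S} L_v$ a product of compact open subgroups, and $\Gamma \cap \mathbf{C}(k) = \{1\}$. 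Letting $F$ denote the Galois closure of $k/\mathbb{Q}$, Chebotarev's density theorem supplies infinitely many rational primes splitting completely in $F$, equivalently in $k$; picking two such primes $p,q$ outside $S_Q$ and outside a finite set of bad primes, each has $d = [k:\mathbb{Q}] \geq 2$ distinct primes above it in $\mathcal{O}_k$, which I label $\mathfrak{p}_1, \ldots, \mathfrak{p}_d$ and $\mathfrak{q}_1, \ldots, \mathfrak{q}_d$.

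Next I define $\Delta_i \subseteq \widehat{\Gamma}$ by replacing, in the above factorization, the factor $\tilde{\mathbf{G}}(\mathcal{O}_{\mathfrak{p}_i})$ with its first principal congruence subgroup $K_1(\mathfrak{p}_i) = \ker(\tilde{\mathbf{G}}(\mathcal{O}_{\mathfrak{p}_i}) \to \tilde{\mathbf{G}}(\mathcal{O}_{\mathfrak{p}_i}/\mathfrak{p}_i))$ and the factor $\tilde{\mathbf{G}}(\mathcal{O}_{\mathfrak{q}_1})$ with $K_1(\mathfrak{q}_1)$. By Proposition \ref{Pro~Corre} these correspond to finite index subgroups $\Gamma_i \subseteq \Gamma$. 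Since $p$ splits completely, the factors at $\mathfrak{p}_1$ and $\mathfrak{p}_2$ are both isomorphic to $\tilde{\mathbf{G}}(\mathbb{Z}_p)$, so the automorphism of $\widehat{\Gamma}$ that transposes them and is the identity on every other factor carries $\Delta_1$ onto $\Delta_2$, yielding $\widehat{\Gamma_1} \cong \widehat{\Gamma_2}$.

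Finally, suppose for contradiction that $\varphi : \Gamma_1 \to \Gamma_2$ is an isomorphism. Corollary \ref{cor~mar~sup} yields $\widehat{\varphi} = \Phi_{\mathbb{A}} \circ \sigma_{\mathbb{A}}^0$, where $\sigma \in \operatorname{Aut}(k/\mathbb{Q})$ permutes the places of $k$ (preserving the rational prime beneath each) and $\Phi_{\mathbb{A}}$ acts componentwise by $k_v$-automorphisms. Projecting to the $\mathfrak{p}_1$-component: the pro-$p$ subgroup $K_1(\mathfrak{p}_1)$ is mapped to a pro-$p$ subgroup sitting at the $\sigma(\mathfrak{p}_1)$-component of $\widehat{\Gamma_2}$. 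Since $\sigma(\mathfrak{p}_1)$ lies above $p \notin S_Q$, and among the factors of $\widehat{\Gamma_2}$ above $p$ only the one at $\mathfrak{p}_2$ is pro-$p$ (the others being $\tilde{\mathbf{G}}(\mathcal{O}_{\mathfrak{p}_j})$, which are never pro-$p$), we must have $\sigma(\mathfrak{p}_1) = \mathfrak{p}_2$; the analogous argument at $\mathfrak{q}_1$ forces $\sigma(\mathfrak{q}_1) = \mathfrak{q}_1$. To finish I invoke the field-theoretic fact that when $q$ splits completely in $k$, $\operatorname{Aut}(k/\mathbb{Q})$ acts freely on the primes of $k$ above $q$: identifying these primes with $H \backslash G$ for $G = \operatorname{Gal}(F/\mathbb{Q})$ and $H = \operatorname{Gal}(F/k)$, the action is $nH \cdot Hg = Hng$, and $Hng = Hg$ forces $n \in H$. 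Thus $\sigma = 1$, whence $\mathfrak{p}_2 = \sigma(\mathfrak{p}_1) = \mathfrak{p}_1$, the desired contradiction. The main obstacle to watch for is the step identifying $\sigma(\mathfrak{p}_1)$: one must rule out that the pro-$p$ image of $K_1(\mathfrak{p}_1)$ could be absorbed by a factor $L_v$ with $v \in S$, which is automatic because $\sigma$ preserves rational primes and $p \notin S_Q$.
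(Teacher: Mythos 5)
Your proposal is correct and follows essentially the same route as the paper: pick two completely split primes $p,q$ via Chebotarev, form congruence subgroups that differ only in which prime $\mathfrak{p}_i$ above $p$ is shrunk, observe the transposition $\mathfrak{p}_1\leftrightarrow\mathfrak{p}_2$ gives an isomorphism of profinite completions, and use Corollary \ref{cor~mar~sup} plus the free action of $\operatorname{Aut}(k)$ on $\{\mathfrak{q}_1,\ldots,\mathfrak{q}_d\}$ to rule out an abstract isomorphism. Your $\Delta_i$ (replacing the factors at $\mathfrak{p}_i$ and $\mathfrak{q}_1$ by $K_1$) is exactly the closure of the paper's $\Gamma(\mathfrak{p}_i\mathfrak{q}_1)$, so the constructions coincide.

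One small remark in your favour: your pinning-down step is the correct and complete version of the paper's. The factor of $\widehat{\Gamma_1}$ at $\mathfrak{p}_1$ is pro-$p$, so it must land on the unique pro-$p$ factor of $\widehat{\Gamma_2}$ over $p$, forcing $\sigma(\mathfrak{p}_1)=\mathfrak{p}_2$; analogously $\sigma(\mathfrak{q}_1)=\mathfrak{q}_1$, and then freeness of the $\operatorname{Aut}(k)$-action on $H\backslash G$ gives $\sigma=1$, contradicting $\sigma(\mathfrak{p}_1)=\mathfrak{p}_2\neq\mathfrak{p}_1$. The paper's published line ``$\sigma_{\mathbb{A}}:\mathfrak{p}_1\mapsto\mathfrak{p}_1$'' appears to be a slip for ``$\sigma_{\mathbb{A}}:\mathfrak{q}_1\mapsto\mathfrak{q}_1$''; your version resolves it cleanly. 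Your closing sanity check (that the image cannot be absorbed into some $L_v$ with $v\in S$ because $\sigma$ permutes places over a fixed rational prime and $p,q\notin S_Q$) is exactly the right thing to verify and is correctly dispatched.
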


\begin{proof}
(Following \descref{Method C}) As before, by moving to a finite index subgroup we can assume that $\Gamma \cap \mathbf{C}(k) = \{1\}$ and that $\widehat{\Gamma} = \overline{\Gamma} = \Lambda \times \prod_{v\notin S} \tilde{\mathbf{G}}(\mathcal{O}_{v})$, for some finite set of places $S$ and $\Lambda \subseteq \prod_{v\in S}\tilde{\mathbf{G}}(k_{v})$ commensurable with $\prod_{v\in S}\tilde{\mathbf{G}}(\mathcal{O}_{v})$. Let $S_{Q}$ and $S_{\text{full}}$ be as before. By Chebotarev's density theorem there exist infinitely many rational primes that split completely in $k$ \cite[Corollary 13.6]{Ne13}, pick two such different primes $p,q\notin S_{Q}$. Say $\mathfrak{p}_{1},...,\mathfrak{p}_{d}|p$ and $\mathfrak{q}_{1},...,\mathfrak{q}_{d}|q$. For $i=1,2$, let $\Gamma_{i}:=\Gamma(\mathfrak{p}_{i}\mathfrak{q}_{1})$ be the principle congruence subgroups modulo $\mathfrak{p}_{i}\mathfrak{q}_{1}$, then obviously $\widehat{\Gamma_{1}}\cong \widehat{\Gamma_{2}}$. \newline
 By corollary \ref{cor~mar~sup}, if $\Gamma_{1}$ and $\Gamma_{2}$ are isomorphic, then there exist unique adelic automorphisms $\Phi_{\mathbb{A}}$ and $\sigma_{\mathbb{A}}^{0}$ of $\tilde{\mathbf{G}}(\mathbb{A}_{k}^{f})$ such that $\sigma_{\mathbb{A}}^{0}$ is induced from an automorphism of $k$ and $\Phi_{\mathbb{A}}$ is induced from a $k$-automorphism of $\tilde{\mathbf{G}}$ such that $\varphi(\gamma)=\Phi_{\mathbb{A}}(\sigma_{\mathbb{A}}^{0}(\gamma))$ for every $\gamma\in \Gamma_{1}$, taking closures one deduces that $(\tilde{\varphi}_{\mathbb{A}}\circ \sigma_{\mathbb{A}})(\widehat{\Gamma_{1}}) = \widehat{\Gamma_{2}}$. As $\sigma_{\mathbb{A}}$ acts by permuting the places and $\tilde{\varphi}_{\mathbb{A}}$ acts place-wise it must be that $\sigma_{\mathbb{A}}:\mathfrak{p}_{1} \mapsto \mathfrak{p}_{1}$. We claim that such $\sigma$ must be trivial, which will finish the proof. \newline
Indeed, let $N$ be the Galois closure of $k$, $G:=Gal(N/\mathbb{Q}),\, H:=Gal(N/k)$, let $\mathfrak{r}$ be a prime lying over $\mathfrak{p}_{1}$ and $G_{\mathfrak{r}}$ be its decomposition group. The correspondence $H \backslash G / G_{\mathfrak{r}} \to \{ \mathfrak{p}_{1},....,\mathfrak{p}_{d} \}$ given by $H\sigma G_{\mathfrak{r}} \mapsto \sigma \mathfrak{p}_{1}$ is a one to one correspondence (of $G$-sets), hence $G_{\mathfrak{r}}$ must be trivial. Thus, $Aut(k)$ acts freely on the set of primes $\{ \mathfrak{p}_{1},...,\mathfrak{p}_{d} \}$, as needed.
\end{proof}

\section{ Another set of examples }

The following theorem is not needed for the proof of the main theorem, but following method B, it gives many more examples of non-profinitely rigid arithmetic groups.

\begin{Theorem}\label{Proof~B}
        Let $\Gamma \subseteq \tilde{\mathbf{G}}(k)$ be an arithmetic subgroup. Assume further that $\tilde{\mathbf{G}}$ has: \textbf{1)} the congruence subgroup property; \textbf{2)} type $A_{n},D_{n}$ or $E_{6}$; and \textbf{3)} there exists an archimedean place $k_{v}$ such that $\text{rank}_{k_{v}}\tilde{\mathbf{G}} \geq 2$. Then there exist two non-isomorphic finite index subgroups subgroups $\Gamma_{1},\Gamma_{2}\subseteq \Gamma$ with isomorphic profinite completions.
\end{Theorem}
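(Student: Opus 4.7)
The plan is to adapt \descref{Method B} to a general arithmetic lattice $\Gamma \subseteq \tilde{\mathbf{G}}(k)$. Since $\tilde{\mathbf{G}}$ is of type $A_n$, $D_n$ or $E_6$, its Dynkin diagram admits a non-trivial automorphism. I will fix a node of the diagram which is moved by every non-trivial diagram automorphism (e.g.\ an end node in $A_n$, a tail node in $D_n$, an outer node in $E_6$, or any leg node in $D_4$ to handle triality), and let $P$ be the standard maximal parabolic of the split form corresponding to this node; then for every non-trivial Dynkin automorphism $\tau$, the parabolic $\tau(P)$ lies in a conjugacy class distinct from $P$. Fix such a $\tau$ and set $Q := \tau(P)$.

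Mimicking the opening of Theorem \ref{Proof~A}, after passing to a finite-index subgroup I may assume $\Gamma \cap \mathbf{C}(k) = \{1\}$ and $\widehat{\Gamma} \cong \Lambda \times \prod_{v\notin S} \tilde{\mathbf{G}}(\mathcal{O}_v)$. Using Lemma \ref{num~the~lem} I pick two finite places $\mathfrak{p}, \mathfrak{q} \notin S_{\text{full}}$ lying over distinct rational primes $p \neq q$ at which $\tilde{\mathbf{G}}$ splits. Let $\Gamma_1, \Gamma_2 \subseteq \Gamma$ be the finite-index subgroups corresponding, via Proposition \ref{Pro~Corre}, to the open subgroups of $\widehat{\Gamma}$ which impose the $P$-level structure at $\mathfrak{p}$ in both cases, and the $P$-level (resp.\ $Q$-level) structure at $\mathfrak{q}$ for $\Gamma_1$ (resp.\ $\Gamma_2$). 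An explicit isomorphism $\widehat{\Gamma_1} \to \widehat{\Gamma_2}$ is then given by the automorphism of $\widehat{\Gamma}$ that applies $\tau$ locally at the place $\mathfrak{q}$ (available because $\tilde{\mathbf{G}}$ splits there) and the identity everywhere else.

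To rule out an abstract isomorphism $\varphi:\Gamma_1 \to \Gamma_2$, I invoke Corollary \ref{cor~mar~sup}: $\widehat{\varphi}$ must coincide with the restriction of $\Phi_{\mathbb{A}} \circ \sigma_{\mathbb{A}}^0$ for some $k$-automorphism $\Phi$ of $\tilde{\mathbf{G}}$ and some $\sigma \in \mathrm{Aut}(k)$. Analyzing the adelic picture place by place: $\sigma$ fixes $\mathbb{Q}$ and hence permutes places over each rational prime; if $\sigma(\mathfrak{p}) \neq \mathfrak{p}$, the $\sigma(\mathfrak{p})$-component of $\overline{\Gamma_2}$ would be all of $\tilde{\mathbf{G}}(\mathcal{O}_{\sigma(\mathfrak{p})})$, while its image under $\widehat{\varphi}^{-1}$ is a proper finite-index subgroup, a contradiction. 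Hence $\sigma$ fixes $\mathfrak{p}$, and similarly $\mathfrak{q}$. Consequently $\Phi_{\mathfrak{p}}$ sends the $P$-level subgroup to itself and $\Phi_{\mathfrak{q}}$ sends the $P$-level subgroup to the $Q$-level subgroup (so the induced maps on the residue-field reductions carry $P$ to a conjugate of $P$, resp.\ of $Q$). Since both places are split, $\Phi_\mathfrak{p}$ and $\Phi_\mathfrak{q}$ share the outer class of $\Phi$ in the Dynkin automorphism group. If $\Phi$ is inner then $\Phi_\mathfrak{q}$ is inner, forcing $P \sim Q$, a contradiction; if $\Phi$ represents a non-trivial Dynkin automorphism $\tau'$, then the $\mathfrak{p}$-constraint gives $\tau'(P) \sim P$ while the $\mathfrak{q}$-constraint gives $\tau'(P) \sim Q$, again forcing $P \sim Q$.

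The main obstacle is the careful bookkeeping of outer automorphisms at each split place, in particular confirming that the global $k$-outer class of $\Phi$ restricts to the expected (non-trivial) Dynkin class at our chosen split places, and verifying that the parabolic $P$ may be chosen to lie in a non-trivial orbit under \emph{every} non-trivial diagram symmetry. The latter is precisely where the restriction to types $A_n, D_n, E_6$ enters; the former follows from the absolutely almost simple, simply connected hypothesis combined with the splitness of $\tilde{\mathbf{G}}$ at $\mathfrak{p}$ and $\mathfrak{q}$.
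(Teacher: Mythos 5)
Your plan reproduces the paper's Method B almost exactly: pass to the congruence-completion picture, pick two split places $\mathfrak{p},\mathfrak{q}$ via Lemma~\ref{num~the~lem}, impose parabolic level structures that agree at one place and differ by a Dynkin symmetry at the other, and apply Corollary~\ref{cor~mar~sup} to force any abstract isomorphism into the form $C_g\circ\omega_{\mathbb{A}}$ with $\omega$ a single global Dynkin symmetry; the two local constraints are then jointly incompatible. This is the paper's argument with the roles of $\mathfrak{p}$ and $\mathfrak{q}$ interchanged.

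There is, however, a genuine gap in the step you summarize as ``the induced maps on the residue-field reductions carry $P$ to a conjugate of $P$, resp.\ of $Q$.'' The conjugating element $g$ supplied by superrigidity lies in $\tilde{\mathbf{G}}(\overline{k})$ (locally, in $\tilde{\mathbf{G}}(k_v)$), not in $\tilde{\mathbf{G}}(\mathcal{O}_v)$, so there is no automatic reduction of the conjugation to the residue field; you must actually show that if $\theta\neq\theta'$ then the level subgroups $\pi_v^{-1}(P_{v,\theta})$ and $\pi_v^{-1}(P_{v,\theta'})$ cannot be conjugate in the full $p$-adic group $\tilde{\mathbf{G}}(k_v)$. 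The paper proves this with a direct valuation argument: after multiplying $g$ by elements of $\tilde{\mathbf{G}}(k)$ so that it normalizes the fixed maximal torus, one looks at the scalar $f_\alpha$ by which $g$ acts on a root group $U_\alpha$ for a simple root $\alpha\in\omega\theta\setminus\theta$; the relation $\mathrm{val}_v(f_\alpha)=1$ forces the action on $U_{-\alpha}$ to be by $1/f_\alpha$, whence $g^{-1}\pi_v^{-1}(P_{v,\theta})g\not\subseteq\pi_v^{-1}(P_{v,\omega\theta})$. That computation is the heart of the non-isomorphism claim and is absent from your outline.

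A smaller issue: you assert a node (or parabolic) can be chosen to be moved by \emph{every} non-trivial diagram symmetry, and list $D_4$ among the examples. For $D_4$ this is false: the central node is fixed by all of $S_3$, and each leg node is fixed by the transposition of the other two legs, so no node (in fact no subset of nodes) of $D_4$ has a free $S_3$-orbit. Fortunately the stronger condition is not needed: it suffices to fix one non-trivial symmetry $s$ and pick $\theta$ with $\theta\neq s\theta$. The two constraints $\omega\theta=\theta$ at one place and $\omega\theta=s\theta$ at the other force $\theta=s\theta$ for \emph{any} $\omega$ (trivial or not), which already yields the contradiction, exactly as in the paper.
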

Note that $A_{n},D_{n},E_{6}$ are exactly the types of Dynkin diagrams with a non-trivial symmetry.

\begin{proof}
    (Following \descref{Method B}) As before, by moving to a finite index subgroup we can assume that $\Gamma\cap \mathbf{C}(k) = \{1\}$ and that $\widehat{\Gamma} = \overline{\Gamma} = \Lambda \times \prod_{v\notin S} \tilde{\mathbf{G}}(\mathcal{O}_{v})$, for some finite set of places $S$ and $\Lambda \subseteq \prod_{v\in S} \tilde{\mathbf{G}}(k_{v})$ commensurable with $\prod_{v\in S} \tilde{\mathbf{G}}(\mathcal{O}_{v})$. \newline
    By lemma \ref{num~the~lem}, one can find two different primes $\mathfrak{p},\mathfrak{q} \notin S_{\text{full}}$ (where $S_{\text{full}}\supseteq S$ is as in the previous section) lying over different rational primes, such that $\tilde{\mathbf{G}}$ splits over both $k_{\mathfrak{p}}$ and $k_{\mathfrak{q}}$. Fix root systems for $\tilde{\mathbf{G}}(k_{\mathfrak{p}})$ and $\tilde{\mathbf{G}}(k_{\mathfrak{q}})$, and let $\Sigma$ be the set of simple roots, identified for both groups. For $v=\mathfrak{p},\mathfrak{q}$, the reduction maps $\pi_{v}:\tilde{\mathbf{G}}(\mathcal{O}_{v})\to \tilde{\mathbf{G}}(\mathcal{O}_{v}/v)$ are onto, and $G_{v}:=\tilde{\mathbf{G}}(\mathcal{O}_{v}/v)$ is the split universal Chevalley group of the same type as $\tilde{\mathbf{G}}$ over the residue field. The root systems we fixed define corresponding root systems for $G_{v}$.\newline
    For each of the types involved, there exist a non-trivial symmetry $s$ of the Dynkin diagram, pick a subset $\theta \subseteq \Sigma$ which is non-invariant under $s$, e.g.
    \begin{gather*}
    A_{n} : \quad
    \begin{tikzpicture} 
        \draw (0,0) -- (1,0);
        \filldraw (0,0) circle (0.05 cm) node[anchor=south] {$1$};
        \filldraw (1,0) circle (0.05 cm) node[anchor=south] {$2$};
        \draw[dashed] (1,0) -- (3,0);
        \draw (3,0) -- (4,0);
        \filldraw (3,0) circle (0.05 cm) node[anchor=south] {$n-1$};
        \filldraw (4,0) circle (0.05 cm) node[anchor=south] {$n$};
        \draw[red] (1.5,0) ellipse (2cm and 0.5 cm) node[anchor=north] {$\theta$};
        \draw[<->] (0,1) .. controls (1,2) and (3,2) ..(4,1);
        \draw (2,2) node {$s$};
    \end{tikzpicture}
    \end{gather*}
    
    \begin{gather*}
    D_{n}: \quad
        \begin{tikzpicture} 
        \draw (0,0) -- (1,0);
        \filldraw (0,0) circle (0.05 cm) node[anchor=south] {$1$};
        \filldraw (1,0) circle (0.05 cm) node[anchor=south] {$2$};
        \draw[dashed] (1,0) -- (3,0);
        \draw (3,0) -- (4,1);
        \draw (3,0) -- (4,-1);
        \filldraw (3,0) circle (0.05 cm) node[anchor=south] {$n-2$};
        \filldraw (4,1) circle (0.05 cm) node[anchor=south] {$n-1$};
        \filldraw (4,-1) circle (0.05 cm) node[anchor=north] {$n$};
        \draw[red, rotate=12] (2,0) ellipse (3cm and 1 cm) node[anchor=north, outer sep = 28pt] {$\theta$};
        \draw[<->] (4.5,1) .. controls (5,0) .. (4.5, -1);
        \draw (5,0) node {$s$};
    \end{tikzpicture}
    \end{gather*}

    \begin{gather*}
        E_{6}: \quad
        \begin{tikzpicture} 
        \draw (0,0) -- (1,0);
        \filldraw (0,0) circle (0.05 cm) node[anchor=south] {$1$};
        \filldraw (1,0) circle (0.05 cm) node[anchor=south] {$2$};
        \draw (1,0) -- (2,1);
        \draw (1,0) -- (2,-1);
        \draw (2,1) -- (3,1);
        \draw (2,-1) -- (3,-1);
        \filldraw (2,1) circle (0.05 cm) node[anchor=south] {$3$};
        \filldraw (3,1) circle (0.05 cm) node[anchor=south] {$4$};
        \filldraw (2,-1) circle (0.05 cm) node[anchor=north] {$5$};
        \filldraw (3,-1) circle (0.05 cm) node[anchor=north] {$6$};
        \draw[red, rotate=12] (1.5,0) ellipse (2.5cm and 1 cm) node[anchor=north west, outer sep = 8pt] {$\theta$};
        \draw[<->] (3.5,1) .. controls (4,0) .. (3.5, -1);
        \draw (4,0) node {$s$};
        \end{tikzpicture}
    \end{gather*}

The symmetry $s$ of the Dynkin diagram induces an isomorphism $\varphi_{v}$ of $G_{v}$ \cite[Corollary to theorem 29]{St67}. For a subset $R\subseteq \Sigma$ of simple roots, let $P_{v,R}$ be the parabolic subgroup of $G_{v}$ corresponding to $R$, then $P_{v,\theta}$ and $P_{v,s \theta}$ are non-conjugate in $G_{v}$, but isomorphic via $\varphi_{v}$.  \newline
Now, let $\pi:\Gamma \to G_{\mathfrak{p}} \times G_{\mathfrak{q}}$ be the canonical reduction map modulo $\mathfrak{p} \mathfrak{q}$. Consider the following two congruence subgroups
\begin{gather*}
    \Gamma_{1} := \pi^{-1}(P_{\mathfrak{p},\theta} \times P_{\mathfrak{q}, \theta}); \\
    \Gamma_{2} := \pi^{-1}(P_{\mathfrak{p}, s\theta} \times P_{\mathfrak{q}, \theta}).
\end{gather*}
Then $\widehat{\Gamma_{1}}$ and $\widehat{\Gamma_{2}}$ are isomorphic via $\Phi:= (\Phi_{v})_{v}$ where $\Phi_{v} \equiv id$ for $v \neq \mathfrak{p}$ and $\Phi_{\mathfrak{p}}$ is the isomorphism of $\tilde{\mathbf{G}}(k_{\mathfrak{p}})$ induced by the non-trivial symmetry $s$ of the Dynkin diagram. It remains to show that $\Gamma_{1}$ and $\Gamma_{2}$ cannot be isomorphic. \newline
Assume to the contrary that there exists an isomorphism $\varphi:\Gamma_{1} \to \Gamma_{2}$. By corollary \ref{cor~mar~sup}, there exist unique adelic automorphisms $\Phi_{\mathbb{A}}$ and $\sigma_{\mathbb{A}}^{0}$ of $\tilde{\mathbf{G}}(\mathbb{A}_{k}^{f})$ such that $\sigma_{\mathbb{A}}^{0}$ is induced from an automorphism of $k$ and $\Phi_{\mathbb{A}}$ is induced from a $k$-automorphism of $\tilde{\mathbf{G}}$ such that $\varphi(\gamma)=\Phi_{\mathbb{A}}(\sigma_{\mathbb{A}}^{0}(\gamma))$ for every $\gamma\in \Gamma_{1}$. Moreover $\Phi_{\mathbb{A}}$ is of the form $C_{g}\cdot \omega_{\mathbb{A}}$ where $C_{g}$ is conjugation by some $g\in \tilde{\mathbf{G}}(\overline{k})$, $\omega$ is an outer automorphism which comes from a symmetry of the Dynkin diagram and $\omega_{\mathbb{A}}$ just acts as $\omega$ at each place. In particular $\pi_{\mathfrak{p}}^{-1}(P_{\mathfrak{p},\theta})$ is mapped onto $g\pi_{\mathfrak{p}}^{-1}(P_{\sigma(\mathfrak{p}),\omega\theta})g^{-1}$ and $\pi_{\mathfrak{q}}^{-1}(P_{\mathfrak{q},\theta})$ is mapped onto $g\pi_{\mathfrak{q}}^{-1}(P_{\sigma(\mathfrak{q}),\omega\theta})g^{-1}$. By our choice of $\mathfrak{p}$ and $\mathfrak{q}$ it must be that $\sigma(\mathfrak{p})=\mathfrak{p}$ and $\sigma(\mathfrak{q})=\mathfrak{q}$.\newline
We have that $g^{-1}\pi_{\mathfrak{p}}^{-1}(P_{\mathfrak{p},\theta})g=\pi_{\mathfrak{p}}^{-1}(P_{\mathfrak{p},\omega\theta})$, multiplying $g$ from both sides by elements of $\tilde{\mathbf{G}}(k)$, we can assume that conjugation by $g$ preserves the root system that was fixed in the beginning of the proof. If $\theta \neq \omega\theta$, let $\alpha$ be a simple root in $\omega\theta \backslash \theta$, then the action of $g$ on the one-parameter unipotent subgroup $U_{\alpha}$ must be as scalar multiplication by some $f_{\alpha}$ with $\text{val}_{\mathfrak{p}}(f_{\alpha})=1$. Thus, the action on the opposite one-parameter unipotent subgroup $U_{-\alpha}$ is given as scalar multiplication by $1/f_{\alpha}$, but then $g^{-1}\pi_{\mathfrak{p}}^{-1}(P_{\mathfrak{p},\theta})g\nsubseteq \pi_{\mathfrak{p}}^{-1}(P_{\mathfrak{p},\omega\theta})$. Hence $\theta$ must be equal to $\omega\theta$. The same argument imply that $\omega\theta=s\theta$, and so $\theta = \omega\theta = s\theta$. But $\theta$ was chosen to be non-invariant under $s$, a contradiction.
\end{proof}

\section{ Second part of the main theorem - The exceptional cases}

\begin{Lemma}
    Let $\mathbf{G}$ be a connected, simply connected and absolutely almost simple $\mathbb{Q}$-linear algebraic group of type $E_{8},F_{4}$ or $G_{2}$. Then $\mathbf{G}$ splits over $\mathbb{Q}_{p}$ for every $p$.
\end{Lemma}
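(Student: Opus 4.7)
The plan is to classify all $\mathbb{Q}_p$-forms of $\mathbf{G}$ via Galois cohomology and show there is only one. I would begin by recording the two key facts about the exceptional types $E_8$, $F_4$, $G_2$: their centers are trivial (so in these cases the simply connected and adjoint forms coincide) and their Dynkin diagrams admit no non-trivial automorphisms. Together these imply that for the split form $G$ of the given type, the algebraic group of outer automorphisms is trivial and the algebraic automorphism group is just $G$ itself acting by inner automorphisms, i.e. $\mathrm{Aut}(G_{\overline{\mathbb{Q}_p}}) = G(\overline{\mathbb{Q}_p})$.

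Next I would invoke the standard descent description: $\mathbb{Q}_p$-forms of $G$ are in bijection with the Galois cohomology set $H^1(\mathbb{Q}_p, \mathrm{Aut}(G))$. By the previous step this set equals $H^1(\mathbb{Q}_p, G)$, the Galois cohomology of the split simply connected semisimple group $G$ over the non-archimedean local field $\mathbb{Q}_p$.

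Now I would apply Kneser's theorem (Platonov--Rapinchuk \cite{PR93}, Theorem 6.4): for every simply connected semisimple algebraic group $H$ over a non-archimedean local field $K$ of characteristic $0$, one has $H^1(K, H) = 1$. Applying this with $K = \mathbb{Q}_p$ and $H = G$ shows that $H^1(\mathbb{Q}_p, G)$ is trivial, so there is only one $\mathbb{Q}_p$-form of $G$ of this type, namely $G$ itself. In particular $\mathbf{G}_{\mathbb{Q}_p} \cong G$, i.e. $\mathbf{G}$ splits over $\mathbb{Q}_p$.

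There is no real obstacle here beyond correctly citing Kneser's theorem; the proof is essentially a bookkeeping exercise combining (i) the triviality of the outer automorphism group for types $E_8, F_4, G_2$ (so that no Galois twisting can produce non-inner forms), (ii) the triviality of the center (so that inner forms are classified directly by $H^1$ of the group itself rather than of its adjoint quotient), and (iii) Kneser's vanishing theorem over $p$-adic fields.
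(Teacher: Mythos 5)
Your proof is correct and follows essentially the same route as the paper: identify $\mathrm{Aut}(\mathbf{G})\cong\mathbf{G}$ using the trivial center and trivial Dynkin symmetry group for types $E_8$, $F_4$, $G_2$, then apply Kneser's vanishing theorem for $H^1$ of simply connected groups over non-archimedean local fields (Platonov--Rapinchuk, Theorem 6.4) to conclude there is a unique $\mathbb{Q}_p$-form, necessarily the split one. No substantive differences.
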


\begin{proof}
    The $\mathbb{Q}_{p}$ forms of the group $\mathbf{G}$ are classified by the first Galois cohomology set $H^{1}(\mathbb{Q}_{p},Aut(\mathbf{G}))$. For the groups considered, the universal and the adjoint forms coincide, moreover there are no symmetries for their Dynkin diagram, hence $Aut(\mathbf{G}) \cong \mathbf{G}$. As the field $\mathbb{Q}_{p}$ is local and non-archimedean, by \cite[Theorem 6.4]{PR93}, the Galois cohomology group $H^{1}(\mathbb{Q}_{p},\mathbf{G})$ is trivial. Thus, there is only one (up to an isomorphism) $\mathbb{Q}_{p}$ form for $\mathbf{G}$, in particular, this form must be the split form.
\end{proof}

\begin{Remark}\label{aut~spl~grp}
If $\mathbf{G}$ is a $k$-split simple $k$-linear algebraic group, then the automorphism group, $Aut(\mathbf{G}(k))$, of $\mathbf{G}(k)$ is completely known. Precisely, following the notations of \cite[Theorem 30]{St67}, each automorphism can be written as the product of an inner, a diagonal, a graph and a field automorphism. We have used graph automorphisms for \descref{Method B} and field automorphisms for \descref{Method C}. The group of diagonal automorphisms (modulo the inner ones) has a connection with the centre of the universal form \cite[Exercise following theorem 30]{St67}, which was used for \descref{Method A}. Thus, restriction ourselves to the exceptional cases, where $\mathbf{G}$ has type $E_{8},F_{4}$ or $G_{2}$ and $k=\mathbb{Q}$ (and also for $\mathbb{Q}_{p}$), all automorphisms are inner.
\end{Remark}

\begin{Theorem}
    Let $\mathbf{G}$ be a connected, simply connected and absolutely almost simple high $\infty$-rank $\mathbb{Q}$-linear algebraic group of type $E_{8},F_{4}$ or $G_{2}$. If $\Gamma_{1},\Gamma_{2}\subseteq \mathbf{G}(\mathbb{Q})$ are two arithmetic subgroups with isomorphic profinite completions, then $\Gamma_{1}$ and $\Gamma_{2}$ are isomorphic.
\end{Theorem}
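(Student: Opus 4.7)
The strategy is to combine the adelic Margulis superrigidity (Theorem \ref{Ade~Sup~Rig}) with the previous lemma, Remark \ref{aut~spl~grp}, and strong approximation. Since $\mathbf{G}$ has type $E_8, F_4$ or $G_2$, it is simultaneously simply connected and adjoint, so $\mathcal{Z}(\mathbf{G})=1$; the congruence subgroup property is known in all these types (\cite[Ch.9]{PR93}), so each $\widehat{\Gamma_i}$ can be identified, up to a finite central congruence kernel that I will absorb by passing to an appropriate finite-index subgroup, with an open compact subgroup $K_i \subseteq \mathbf{G}(\mathbb{A}_\mathbb{Q}^f)$, and $\Gamma_i = \mathbf{G}(\mathbb{Q}) \cap K_i$.

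Given an isomorphism $\varphi: \widehat{\Gamma_1} \to \widehat{\Gamma_2}$, the composition $\Gamma_1 \hookrightarrow \widehat{\Gamma_1} \to \widehat{\Gamma_2} \to K_2 \subseteq \mathbf{G}(\mathbb{A}_\mathbb{Q}^f)$ has image whose closure is open, so Theorem \ref{Ade~Sup~Rig} produces a central homomorphism $\nu$ and an adelic endomorphism $\eta$ of $\mathbf{G}(\mathbb{A}_\mathbb{Q}^f)$ with $\varphi(\gamma)=\nu(\gamma)\eta(\gamma)$ on $\Gamma_1$. Triviality of the centre forces $\nu \equiv 1$, hence $\varphi|_{\Gamma_1} = \eta|_{\Gamma_1}$. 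By the previous lemma $\mathbf{G}$ splits over every $\mathbb{Q}_p$, and by Remark \ref{aut~spl~grp} every automorphism of $\mathbf{G}(\mathbb{Q}_p)$ is inner (no graph automorphisms exist in these types, no diagonal ones because the centre is trivial, and no nontrivial field automorphisms of $\mathbb{Q}_p$ over itself). Consequently each local factor $\eta_p$ is conjugation by some $g_p \in \mathbf{G}(\mathbb{Q}_p)$, and because $\eta$ preserves the restricted-product structure the $g_p$ lie in $\mathbf{G}(\mathbb{Z}_p)$ for almost all $p$, assembling into an adele $g \in \mathbf{G}(\mathbb{A}_\mathbb{Q}^f)$ with $\eta(x) = g x g^{-1}$.

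Finally, since $\mathbf{G}$ is simply connected, absolutely almost simple, and $\mathbf{G}(\mathbb{R})$ is non-compact (the high $\infty$-rank assumption forces real rank $\geq 2$), strong approximation gives density of $\mathbf{G}(\mathbb{Q})$ in $\mathbf{G}(\mathbb{A}_\mathbb{Q}^f)$. Hence I can write $g = g' m$ with $g' \in \mathbf{G}(\mathbb{Q})$ and $m \in K_1$, so that $g' K_1 (g')^{-1} = g K_1 g^{-1} = K_2$; intersecting with $\mathbf{G}(\mathbb{Q})$ yields $g' \Gamma_1 (g')^{-1} = \Gamma_2$, and conjugation by $g'$ is the desired abstract isomorphism. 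I expect the main technical nuisance to be the bookkeeping around the congruence kernel at the very start — verifying that the identifications $\widehat{\Gamma_i} \cong K_i$ and $\Gamma_i = \mathbf{G}(\mathbb{Q}) \cap K_i$ can be arranged without losing the hypothesis $\widehat{\Gamma_1} \cong \widehat{\Gamma_2}$ — but this should be straightforward given CSP and the trivial centre, and once it is set up every subsequent step is essentially forced.
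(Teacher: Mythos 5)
Your overall strategy is the same as the paper's, and most of the steps are correct, but there are a few points where you gloss over something that the paper handles more carefully — one of which, if carried out literally as you describe, would actually undermine the conclusion.

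The most serious issue is your treatment of the congruence kernel. You say you will "absorb" a finite congruence kernel "by passing to an appropriate finite-index subgroup." But the theorem asks you to prove that $\Gamma_1 \cong \Gamma_2$ themselves are isomorphic, and if you replace the $\Gamma_i$ by smaller finite-index subgroups, you would only conclude that those subgroups are isomorphic, with no way back to the original groups. The paper instead uses the fact that for these types with $\operatorname{rank}_{V_\infty} \geq 2$ the congruence kernel is actually \emph{trivial} (not merely finite): Serre's conjecture, proved in these cases, gives $C(\Gamma) \subseteq \mu(k)$ with triviality precisely when $\mathbf{G}$ is isotropic at some archimedean place, which follows from the high $\infty$-rank assumption. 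This is what lets one write $\widehat{\Gamma_i} = \overline{\Gamma_i}$ exactly, with no quotient to pass through.

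Two further steps are asserted without justification. First, you take for granted that $\eta$ (your notation) decomposes place-by-place as $(\eta_p)_p$ with each $\eta_p$ landing in $\mathbf{G}(\mathbb{Q}_p)$; the paper proves this by examining the composites $\mathbf{G}(\mathbb{Q}_p) \hookrightarrow \mathbf{G}(\mathbb{A}^f) \xrightarrow{\tilde\Phi} \mathbf{G}(\mathbb{A}^f) \twoheadrightarrow \mathbf{G}(\mathbb{Q}_q)$ and showing that for $p \neq q$ these are locally constant, hence have countable normal image, hence are trivial by \cite[Proposition 3.17]{PR93}. Second, you claim $g_p \in \mathbf{G}(\mathbb{Z}_p)$ for almost all $p$ from the fact that conjugation by $g_p$ preserves $\mathbf{G}(\mathbb{Z}_p)$ for almost all $p$; this requires knowing that $\mathbf{G}(\mathbb{Z}_p)$ is self-normalizing in $\mathbf{G}(\mathbb{Q}_p)$, which is true for these groups but not immediate. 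The paper sidesteps both this and the question of whether $(g_p)_p$ is genuinely adelic by \emph{truncating}: it sets $y_p = g_p$ for $p$ in the finite exceptional set $S$ and $y_p = 1$ elsewhere, observes that conjugation by $y$ still carries $\overline{\Gamma_1}$ to $\overline{\Gamma_2}$, and then applies strong approximation to $y$. That modification would tighten your argument at no real cost. The rest of your proof — vanishing of $\nu$ from triviality of the centre, identifying the local automorphisms as inner via Remark \ref{aut~spl~grp}, and the final strong approximation step $g = g'm$ with $m \in \overline{\Gamma_1}$ — matches the paper.
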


\begin{proof}
Let $\Gamma_1,\Gamma_2 \subseteq \mathbf{G}(\mathbb{Q})$ be two arithmetic subgroups. As noted in the preliminaries, the congruence kernel is trivial for these groups, so one can write $\overline{\Gamma_{i}} = \widehat{\Gamma_{i}} = \Lambda_{i} \times \prod_{p\notin S}\mathbf{G}(\mathbb{Z}_{p})$ where $S$ is a finite set and $\Lambda_{i}$ are commensurable with $\prod_{p\in S}\mathbf{G}(\mathbb{Z}_{p})$.  Assume that $\Phi:\overline{\Gamma_{1}} \cong \widehat{\Gamma_{1}} \xrightarrow{\sim} \widehat{\Gamma_{2}} \cong \overline{\Gamma_{2}} \subseteq \mathbf{G}(\mathbb{A})$ is an isomorphism between the profinite completions of the two.
By adelic supperrigidity \ref{Ade~Sup~Rig}, there exists a unique homomorphism of adelic groups
\begin{gather*}
        \tilde{\Phi}:\mathbf{G}(\mathbb{A}^{f}) \to \mathbf{G}(\mathbb{A}^{f})  
\end{gather*}
such that $\tilde{\Phi}|_{\Gamma_{1}} \equiv \Phi\circ \iota|_{\Gamma_{1}}$, as $\mathbf{G}(\mathbb{A}^{f})$ is centerless by our assumption on the type of $\mathbf{G}$. Moreover, using the uniqueness of the map, $\tilde{\Phi}$ must be an isomorphism, and $\tilde{\Phi}|_{\overline{\Gamma_{1}}} \equiv \Phi$. \newline
Consider the homomorphisms $\tilde{\Phi}_{p,q}:\mathbf{G}(\mathbb{Q}_{p}) \to \mathbf{G}(\mathbb{Q}_{q})$ which are the composite
\begin{gather*}
    \mathbf{G}(\mathbb{Q}_{p}) \xrightarrow{\iota_{p}} \mathbf{G}(\mathbb{A}^{f}) \xrightarrow{\tilde{\Phi}} \mathbf{G}(\mathbb{A}^{f}) \xrightarrow{\pi_{q}} \mathbf{G}(\mathbb{Q}_{q})
\end{gather*}
of the inclusion in the $p$'th place, $\tilde{\Phi}$, and the projection onto the $q$'th place. This is a continuous homomorphism between a $p$-adic group and a $q$-adic group, so if $p\neq q$ it must be a locally constant map. So its image is a normal countable subgroup of $\mathbf{G}(\mathbb{Q}_{q})$, in particular it is not of finite index, and hence must be trivial \cite[Proposition 3.17]{PR93}. Thus, $\tilde{\Phi} = (\tilde{\Phi}_{p,p})_{p}$ is given by an isomorphism at each place, and $\tilde{\Phi}_{p,p}$ must be conjugation by some $y'_{p}\in \mathbf{G}(\mathbb{Q}_{p})$ (see the remark above). \newline
We truncate $\tilde{\Phi}$ in the following manner, 
\[
\text{write }y= (y_{p})_{p},\text{ where }y_{p} = \begin{cases} y_{p} = y'_{p} & \quad p\in S \\ y_{p}=1 & \quad p\notin S \end{cases}.
\]
By our choice of $S$,  conjugation by $y$ is again an isomorphism between $\overline{\Gamma_{1}}$ and $\overline{\Gamma_{2}}$. By the Strong Approximation theorem \cite[Theorem 7.12]{PR93}, there exists some $g\in \mathbf{G}(\mathbb{Q})$ with $g\in y\overline{\Gamma_{1}}$. Thus, conjugation by $g\in \mathbf{G}(\mathbb{Q})$ is an isomorphism between $\overline{\Gamma_{1}}$ and $\overline{\Gamma_{2}}$. We have that $\Gamma_{i} = \overline{\Gamma_{i}}\cap \mathbf{G}(\mathbb{Q})$, which imply that $\Gamma_{1}$ and $\Gamma_{2}$ can be conjugated by $g$, as needed.
\end{proof}

\section{Final Remarks}

It is possible to generalize our methods even further. For example, using \descref{Method A}, we can find finite index subgroups of $\Gamma:= SL_{2}(\mathbb{Z}[1/p])$ which are not profinitely rigid. Explicitly (for $p\neq 2,3,5$), the following finite index subgroups of $SL_{2}(\mathbb{Z}[1/p])$ are non-isomorphic, but their profinite completions are:
\begin{gather*}
    \Gamma_{1} := \left\{ \begin{pmatrix} a & b \\ c & d \end{pmatrix}\in SL_{2}(\mathbb{Z}[1/p]) :\, \begin{matrix} b,c \equiv 0 \mod 3,5 \\ a,d \equiv \pm 1 \mod 3 \\ a,d \equiv 1 \mod 5 \end{matrix}   \right\}; \\
    \Gamma_{2} := \left\{ \begin{pmatrix} a & b \\ c & d \end{pmatrix}\in SL_{2}(\mathbb{Z}[1/p]) :\, \begin{matrix} b,c \equiv 0 \mod 3,5 \\ a,d \equiv  1 \mod 3 \\ a,d \equiv \pm 1 \mod 5 \end{matrix}   \right\}.
\end{gather*}
Indeed, just as in \S3, $\widehat{\Gamma_{1}}\cong \widehat{\Gamma(15)}\times \mathbb{Z}/2\mathbb{Z} \cong \widehat{\Gamma_{2}}$, where $\Gamma(15)$ is the principle congruence subgroup of $\Gamma$ of level 15. \newline 
Let us stress out that it is still unknown whether or not $SL_{2}(\mathbb{Z}[1/p])$ itself is profinitely rigid, and in fact, there are some reasons to believe it is profinitely rigid (see for example \cite[\S 4]{CTLR22}). On the other hand, increasing slightly the dimension, it has been shown that $SL_{4}(\mathbb{Z}[1/p])$ is not profinitely rigid \cite{CTLR22}. \newline
We would like to state a stronger version of Theorem \ref{Main~The} which includes the above example. First, we need some further notations. Let $S\subseteq V(k)$ be a finite set of places containing all the archimedean places. The ring of $S$-integers of the number field $k$ is 
\[
\mathcal{O}_{k,S}:=\{ x\in k:\, v(x)\geq 0 \, \forall v\notin S\}
\]
Let $\tilde{\mathbf{G}}$ be a connected, simply connected, absolutely almost simple $k$-linear algebraic group with a fixed faithful $k$-representation $\rho:\tilde{\mathbf{G}}\to GL(n_{\rho})$ . A subgroup $\Gamma \subseteq \tilde{\mathbf{G}}(k)$ is called an $S$-arithmetic subgroup if it is commensurable with $\tilde{\mathbf{G}}(\mathcal{O}_{S})$. As in \S2, there is a map from the profinite completion to the congruence completion, denote its kernel by $C(\Gamma,S)$. The group $\Gamma$ is said to have the congruence subgroup property (with respect to $S$) if $C(\Gamma,S)$ is a finite group. Again, this is actually a property of the ambient group $\tilde{\mathbf{G}}$, the field $k$ and the set $S$. The proofs given throughout the paper, carry over to establish:
\begin{Theorem}
    Let $n$ be a positive integer, $k$ a number field, $S$ a finite set of places of $k$ containing all the archimedean places, $\tilde{\mathbf{G}}$ a connected, simply connected and absolutely almost simple $k$-linear algebraic group such that $\sum_{v\in S} \text{rank}_{k_{v}}\tilde{\mathbf{G}} \geq 2$ and such that $\tilde{\mathbf{G}}(k)$ satisfies the congruence subgroup property (with respect to $S$). Let $\Gamma \subseteq \tilde{\mathbf{G}}(k)$ be an $S$-arithmetic subgroup. Then, unless $\tilde{\mathbf{G}}$ has type $G_{2},F_{4}$ or $E_{8}$ and $k=\mathbb{Q}$, $\Gamma$ has infinitely many sequences of pairwise non-isomorphic finite index subgroups $\Gamma_{1},...,\Gamma_{n}$ with isomorphic profinite completions.
\end{Theorem}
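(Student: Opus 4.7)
The plan is to adapt the two constructions of \S3 to the $S$-arithmetic setting and to strengthen each so as to produce $n$ pairwise non-isomorphic subgroups rather than pairs. Two ingredients make the $S$-arithmetic setting essentially no harder than the arithmetic one. First, the congruence subgroup property with respect to $S$ guarantees that after passing to a suitable finite index subgroup of $\Gamma$, one has a decomposition
\[
\widehat{\Gamma} \;\cong\; \Lambda \;\times\; \prod_{v\notin S'} \tilde{\mathbf{G}}(\mathcal{O}_v)
\]
for a finite set $S'\supseteq S$ and an open subgroup $\Lambda\subseteq \prod_{v\in S'\setminus S}\tilde{\mathbf{G}}(\mathcal{O}_v)\times\prod_{v\in S}\tilde{\mathbf{G}}(k_v)$ commensurable with the obvious maximal compact at the places outside $S$, exactly as in \S3. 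Second, since $\sum_{v\in S}\operatorname{rank}_{k_v}\tilde{\mathbf{G}}\geq 2$, the diagonal embedding of $\Gamma$ into $\prod_{v\in S}\tilde{\mathbf{G}}(k_v)$ is an irreducible lattice of higher $S$-rank, so Margulis' superrigidity \cite[Theorem VIII.3.6]{Ma91} continues to apply, giving an $S$-arithmetic analogue of Corollary \ref{cor~mar~sup}: every isomorphism $\varphi:\Gamma_1\to\Gamma_2$ of $S$-arithmetic subgroups extends uniquely to $\Phi_{\mathbb{A}}\circ\sigma_{\mathbb{A}}^{0}$.

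For types outside $E_8,F_4,G_2$, use the $n$-ary version of Method A. By Lemma \ref{num~the~lem}, choose $n$ finite places $\mathfrak{p}_1,\ldots,\mathfrak{p}_n\notin S'_{\mathrm{full}}$ over pairwise distinct rational primes with $\mathbf{C}(\mathcal{O}_{\mathfrak{p}_i})=\mathbf{C}(\mathbb{C})$ non-trivial; pick central elements $\rho_i\in\mathbf{C}(\mathcal{O}_{\mathfrak{p}_i})$ of a common prime order $m$, and form elements $\epsilon_i\in\widehat{\Gamma}$ supported at $\mathfrak{p}_i$ as in the proof of Theorem \ref{Proof~A}. With the analogously-defined $\Delta$, the groups $\Delta_i:=\langle\Delta,\epsilon_i\rangle\cong\Delta\times\mathbb{Z}/m\mathbb{Z}$ are isomorphic profinite groups, and Proposition \ref{Pro~Corre} yields finite index subgroups $\Gamma_i\subseteq\Gamma$ with $\widehat{\Gamma_i}=\Delta_i$. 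An isomorphism $\Gamma_i\cong\Gamma_j$ for $i\neq j$ would, by the extended superrigidity, produce $\sigma\in\operatorname{Aut}(k)$ sending the unique place of $\widehat{\Gamma_i}$ with non-trivial centre (namely $\mathfrak{p}_i$) to that of $\widehat{\Gamma_j}$ (namely $\mathfrak{p}_j$). Since $\sigma$ fixes $\mathbb{Q}$ and $\mathfrak{p}_i,\mathfrak{p}_j$ lie over distinct rational primes, this is impossible.

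For type $E_8,F_4,G_2$ with $k\neq\mathbb{Q}$ (where Method A fails), strengthen Method C as follows. Choose $n$ distinct rational primes $p_1,\ldots,p_n$ splitting completely in $k$, let $\mathfrak{p}_{i,1},\mathfrak{p}_{i,2}$ be two primes above $p_i$, and fix one further prime $\mathfrak{q}_1$ above a completely split rational prime $q\notin\{p_1,\ldots,p_n\}$. Define $\Gamma_i$ to be the principal congruence subgroup modulo $\mathfrak{q}_1\cdot\mathfrak{p}_{i,2}\cdot\prod_{j\neq i}\mathfrak{p}_{j,1}$. All $\widehat{\Gamma_i}$ are mutually isomorphic, via swapping $\mathfrak{p}_{i,1}\leftrightarrow\mathfrak{p}_{i,2}$ within each $p_i$-fiber. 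If $\Gamma_i\cong\Gamma_j$ for some $i\neq j$, the associated $\sigma\in\operatorname{Aut}(k)$ must send the unique special prime over $q$ in both groups to itself, i.e.\ $\sigma(\mathfrak{q}_1)=\mathfrak{q}_1$; the free-action argument of Theorem \ref{Proof~C} (which applies to any completely split prime) then gives $\sigma=\mathrm{id}$. But then the special prime at the $p_i$-fiber of $\Gamma_i$, namely $\mathfrak{p}_{i,2}$, is $\sigma$-fixed and so must coincide with the special prime at the $p_i$-fiber of $\Gamma_j$, namely $\mathfrak{p}_{i,1}$, a contradiction. Varying the choices of $p_i$ and $q$ over the infinitely many completely split primes supplied by Chebotarev gives infinitely many such $n$-tuples.

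The main point to verify is that Margulis' superrigidity and the profinite decomposition transfer verbatim to the $S$-arithmetic setting; both reduce to standard results once the higher $S$-rank hypothesis and the $S$-version of the congruence subgroup property are in hand. Once these are established, the combinatorial enhancements above complete the proof in both cases.
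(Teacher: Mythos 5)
Your proposal is correct and follows exactly the route the paper intends: the paper's own proof of this theorem is the single sentence ``The proofs given throughout the paper carry over,'' and your argument does precisely that, running the $n$-ary, $S$-arithmetic analogues of Methods A and C over the same superrigidity and CSP machinery. Two small points to tighten (neither affects the outcome): in the $S$-arithmetic congruence decomposition, the places of $S$ (including archimedean ones) must not appear at all --- $\overline{\Gamma}$ is a compact open subgroup of $\prod^{*}_{v\in V_f(k)\setminus S}\tilde{\mathbf{G}}(k_v)$, so your $\Lambda$ should only involve finite places of $S'\setminus S$; and in the Method~A step the correct formulation is not that $\mathfrak{p}_i$ is \emph{the unique} place of $\widehat{\Gamma_i}$ with non-trivial centre (many places $v\nmid p_1\cdots p_n$ will have $\mathbf{C}(\mathcal{O}_v)\neq 1$), but rather that $\sigma(\mathfrak{p}_i)$ must lie over the rational prime $p_i$, while every place of $\widehat{\Gamma_j}$ above $p_i$ was explicitly truncated to be centerless, which is what the paper's argument in Theorem \ref{Proof~A} actually uses.
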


As in the main theorem, the exceptional cases are indeed exceptional. Moreover, in these cases, if $\Gamma_{1}$ is an $S_{1}$-arithmetic subgroup and $\Gamma_{2}$ is an $S_{2}$-arithmetic subgroup with $\widehat{\Gamma_{1}}\cong \widehat{\Gamma_{2}}$ then $S_{1}=S_{2}$ and $\Gamma_{1}\cong\Gamma_{2}$.

\bibliographystyle{acm}
\bibliography{References}

\nocite{*}

\end{document}